\DeclareMathOperator{\New}{Prot}
\DeclareMathOperator{\Prot}{Prot}
\DeclareMathOperator{\Btw}{Mid[S]}
\newcommand{\fp}{\F p}
\newcommand{\fq}{\F q}
\newcommand{\lcorner}{\textup{lc}} 
\newcommand{\rcorner}{\textup{rc}}
\newcommand{\oa}{\ol \bga}
\theoremstyle{plain}
\newtheorem{theorem}{Theorem}
\newtheorem{lemma}[theorem]{Lemma}
\newtheorem{corollary}[theorem]{Corollary}
\theoremstyle{definition}
\newtheorem{problem}{Problem}
\begin{document}
\title[Congruences of fork extensions]{Congruences of fork extensions of\\ slim, planar, semimodular lattices}  
\author{G. Gr\"{a}tzer} 
\address{Department of Mathematics\\
  University of Manitoba\\
  Winnipeg, MB R3T 2N2\\
  Canada}
\email[G. Gr\"atzer]{gratzer@me.com}
\urladdr[G. Gr\"atzer]{http://server.maths.umanitoba.ca/homepages/gratzer/}

\date{Feb. 17, 2014}
\subjclass[2010]{Primary: 06C10. Secondary: 06B10.}
\keywords{semimodular lattice, fork extension, congruence, join-irreducible congruence, prime interval, planar, slim.}

\begin{abstract}
For a slim, planar, semimodular lattice $L$ and covering square~$S$, 
G.~Cz\'edli and E.\,T.~Schmidt introduced the fork extension, $L[S]$,
which is also a slim, planar, semimodular lattice.
We investigate when a congruence of $L$ extends to $L[S]$. 

We introduce a join-irreducible congruence 
$\boldsymbol{\gamma}(S)$ of $L[S]$.
We determine when it is new, 
in the sense that it is not generated 
by a join-irreducible congruence of $L$. 
When it is new, we describe the congruence $\boldsymbol{\gamma}(S)$ 
in great detail.
The main result follows: 
\emph{In the order of join-irreducible congruences 
of a slim, planar, semimodular lattice $L$,
the congruence $\boldsymbol{\gamma}(S)$ has \emph{at most two covers.}}
\end{abstract}

\maketitle

\section{Introduction}\label{S:Introduction}
    
Let $L$ be a planar semimodular lattice.
As in G. Gr\"atzer and E.~Knapp~\cite{GKn07},  
we call $L$ \emph{slim} if $L$ contains
no $\SM 3$ sublattice.
Note an alternative definition of slimness in G. Cz\'edli and E.\,T. Schmidt~\cite{CS12}; it implies planarity.

Let $L$ be a slim, planar, semimodular lattice, an \emph{SPS lattice}.
As in G.~Cz\'edli and E.\,T.~Schmidt~\cite{CS12a}, 
\emph{inserting a fork} into $L$ 
at a covering square~$S = \set{o, a_l, a_r, t}$ 
($a_l$~to the left of $a_r$), 
firstly, replaces~$S$ by a~copy of $\SfS 7$, 
adding the elements $b_l, b_r, m$ as in Figure~\ref{F:s7}. 

Secondly, if there is a chain 
$u\prec v\prec w$ such that~the element $v$ has just been inserted 
and $T = \set{x=u \mm z, z, u, w=z \jj u}$
is a covering square in the lattice~$L$ 
(and so $u \prec v \prec w$ is not on the boundary of $L$) 
but $x \prec z$ at the present stage of the construction,
then we insert a new element~$y$ 
such that $x \prec y \prec z$ and $y \prec v$.
Let $L[S]$ denote the lattice we obtain at the termination of the process.
As~observed in G.~Cz\'edli and E.\,T.~Schmidt~\cite{CS12a},
$L[S]$ is an SPS lattice.
See Figure~\ref{F:forkexample} for an illustration.

In this paper, we start the investigation of the congruences of $L[S]$
as they relate to the congruences of $L$. 

Let $L$ and $K$ be lattices and let $K$ be an extension of~$L$.
Let $\bga$ be a congruence of~$L$ and let $\bgb$ be a congruence of~$K$.
We call $\bgb$ an \emph{extension} of $\bga$
if $\bgb$ restricted to~$L$, in formula, $\bgb\restr L$, equals $\bga$.
Let $\oa = \consub{K}{\bga}$ 
denote the smallest congruence of~$K$ containing $\bga$; it is called the congruence of $L[S]$ \emph{generated by $\bga$}. 
In general, $\oa$ is not an extension of $\bga$. 
If it is, then we call $\oa$ the \emph{minimal extension} 
of $\bga$ to~$K$.  

If $\bga$ is a join-irreducible congruence of $L$
(that is, $\bga= \con{\fp}$ for a prime interval $\fp$ of $L$),
then $\oa$ is a join-irreducible congruence of $L[S]$ (namely, $\ol\bga= \consub{L[S]}{\fp}$).

Every prime interval of $L[S]$ is perspective to a prime interval of $L$, 
so the only candidate for a new join-irreducible congruence in $L[S]$ is the congruence $\bgg(S) = \consub{L[S]}{m,t}$.

\begin{figure}[t]
\centerline{\includegraphics{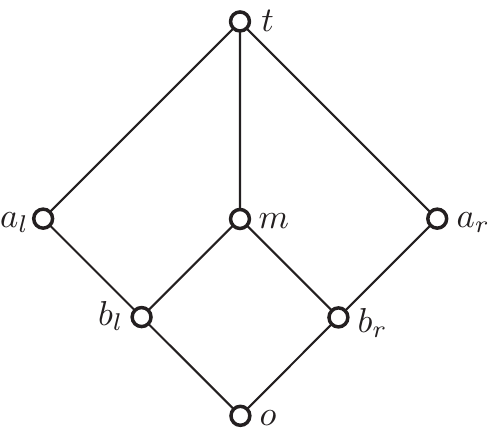}}
\caption{The lattice $\SfS 7$}\label{F:s7}
\end{figure}

\begin{figure}[t]
\centerline{\includegraphics{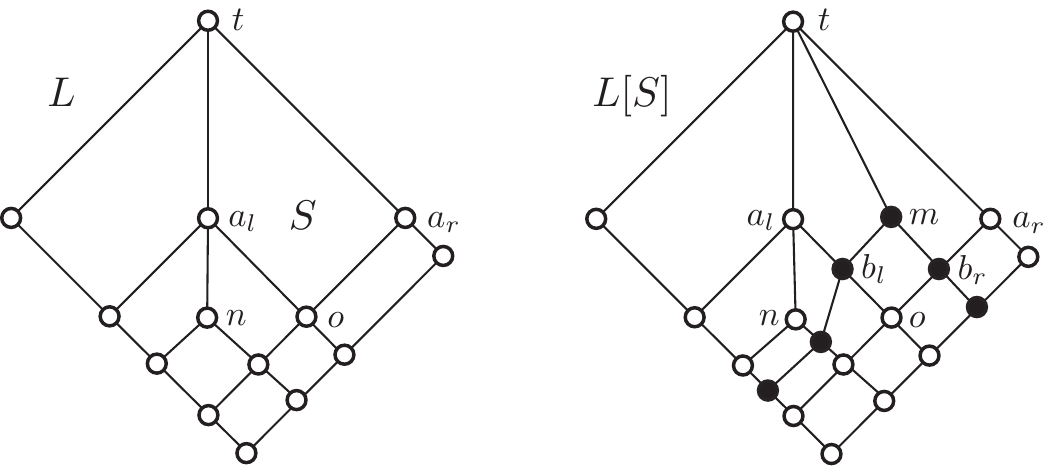}}
\caption{Inserting a fork at $S$}\label{F:forkexample}
\end{figure}

We examine the extendibility of congruences to a fork extension.

\begin{theorem}\label{T:extension}
Let $L$ be a slim, semimodular, planar lattice 
and $\bga$ be a congruence of~$L$. 
Let $S = \set{o, a_l, a_r, t}$ be a covering square of $L$.
\begin{enumeratei}
\item If $\bga \restr S = \one_S$, 
then $\bga$ extends to $L[S]$.
\item If $\bga \restr S = \zero_S$, 
then $\bga$ extends to $L[S]$.
\item If $\bga \restr S$ is not trivial, 
then $\bga$ may or may not extend to $L[S]$.
\end{enumeratei}
\end{theorem}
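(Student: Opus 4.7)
The plan is to treat the three parts separately, constructing the extension $\ol\bga$ explicitly in parts~(i) and~(ii) by specifying which newly inserted elements of $L[S]\setminus L$ should join which $\bga$-blocks, and exhibiting two small examples in~part~(iii).

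For~(i), assume $\bga\restr S=\one_S$, so the four corners of~$S$ lie in a single $\bga$-block~$B$. Define $\ol\bga$ on $L[S]$ by keeping $\bga$ on~$L$ and absorbing every element of $L[S]\setminus L$ into~$B$. By the iterative definition of the fork insertion, each auxiliary element $y$ is sandwiched between two comparable elements that recursively trace back to corners of~$S$, so absorbing the new elements into~$B$ keeps the block convex. Compatibility of $\ol\bga$ with join and meet then reduces, using that $L$ is a sublattice of $L[S]$, to the observation that a join or meet of an $L$-element with a new element either lands again in~$L$ (in the appropriate $\bga$-block) or lands in $L[S]\setminus L$ (hence in~$B$).

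For~(ii), assume $\bga\restr S=\zero_S$, so the four corners lie in four distinct $\bga$-blocks. Define $\ol\bga$ by keeping $\bga$ on~$L$ and letting every new element of $L[S]\setminus L$ be its own singleton block. The verification that $\ol\bga$ is a lattice congruence requires, for $u\equiv v\pmod\bga$ in~$L$ and any new element~$w$, that $u\jj w$ and $v\jj w$ (similarly for meets) lie in the same $\ol\bga$-block. Because no two corners of~$S$ are $\bga$-related, a $\bga$-block can ``interact'' with the fork only through a single side, so such joins and meets either stay inside one $\bga$-block of~$L$ or remain at the same new element. This compatibility check, complicated by the possibly deep iterative insertion of auxiliary elements~$y$ along chains $u\prec v\prec w$ deep in~$L$, is where I expect the main technical work to lie.

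For~(iii), I would exhibit two small SPS lattices having the same nontrivial restriction $\bga\restr S$, say the congruence of~$S$ collapsing $\{o,a_l\}$ and $\{a_r,t\}$. The `extends' case is already realized by $L=S$ itself, for which $L[S]=\SfS 7$; a direct computation shows the partition with blocks $\{o,a_l,b_l\}$ and $\{a_r,b_r,m,t\}$ is a congruence of $\SfS 7$ restricting to~$\bga$. For the `does not extend' case, I would embed~$S$ as a covering square inside a larger SPS lattice whose external structure (a second covering square sharing an edge with~$S$, or a nearby prime interval perspective to an edge of~$S$) forces, through perspectivities outside~$S$, an identification of two $L$-elements that are not $\bga$-equivalent, thereby blocking any extension.
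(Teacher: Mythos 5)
Your overall strategy for (i) and (ii) --- building the extension explicitly by deciding which blocks the new elements $z_{l,i}$ and $m$ should join --- is exactly the paper's strategy, but both of your concrete partitions are wrong, and for the same reason: you assign the new elements without regard to the $\bga$-blocks of their covers $x_{l,i}\succ z_{l,i}\succ y_{l,i}$ in $L$. In (i), the hypothesis $\bga\restr S=\one_S$ forces $\bga$ to collapse every prime interval $[y_{l,i},x_{l,i}]$ of the left wing (they are projective to $[o,a_l]$), but it does \emph{not} force $x_{l,i}$ and $y_{l,i}$ into the block $B$ containing the corners of $S$. Concretely, take $L=\SC 3\times\SC 2$, let $S$ be the covering square $[o,t]$ with $t=1_L$, and let $\bga=\consub{L}{o,t}$; its blocks are $B=[o,t]$ and $\set{y_{l,2},x_{l,2}}$. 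Absorbing $z_{l,2}$ into $B$ destroys convexity, since $z_{l,2}<x_{l,2}<t$ with $x_{l,2}\nin B$, and the substitution property fails: from $z_{l,2}\equiv t$ one gets $z_{l,2}=z_{l,2}\mm x_{l,2}\equiv t\mm x_{l,2}=x_{l,2}$, which would drag $x_{l,2}$ into $B$ and so spoil the restriction to $L$. The correct placement --- and this is what the paper does --- is to put each new element into the interval of $L[S]$ spanned by the $\bga$-block of its two $L$-covers; that is, the classes of $\ol\bga$ are the intervals $[u,v]_{L[S]}$ for the classes $[u,v]_L$ of $\bga$.

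In (ii) the all-singletons choice fails similarly, because $\bga\restr S=\zero_S$ does not prevent $x_{l,i}\equiv x_{l,i+1}$ under $\bga$. In the same $L=\SC 3\times\SC 2$ with the same $S$, the congruence collapsing $x_{l,2}\prec x_{l,1}=a_l$ (and its perspective images) restricts to $\zero_S$, yet $z_{l,1}\mm x_{l,1}=z_{l,1}$ and $z_{l,1}\mm x_{l,2}=z_{l,2}$, so any extension must identify $z_{l,1}$ with $z_{l,2}$. This is why the paper groups the new elements into the intervals $[z_{l,\ul i},z_{l,\ol i}]$ dictated by which $x_{l,i}$ are already $\bga$-congruent, keeping only $m$ as a singleton, and then checks the cover conditions of Lemma~\ref{L:technical}. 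Your part (iii) is sound in outline: the $\SfS 7$ computation is correct, and the paper's non-extending example (Figure~\ref{F:notextend}) is of precisely the kind you describe, though you would still need to produce and verify a concrete second example.
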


Let $L$ and $S$ be as in Theorem~\ref{T:extension}. We call the covering square of $S =\set{o, a_l, a_r, t}$  a \emph{tight square}, 
if $t$ covers exactly two elements, namely, $a_l$ and $a_r$, in $L$;
otherwise, $S$ is a \emph{wide square}.

\begin{theorem}\label{T:wide}
Let $L$ be an SPS lattice. 
If $S$ is a wide square, 
then $\bgg(S) = \consub{L[S]}{m,t}$ is generated by a~congruence of $L$.
\end{theorem}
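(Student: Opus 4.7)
The plan is to locate, in $L$ itself, a prime interval whose congruence, when lifted to $L[S]$, equals $\bgg(S)$. The extra lower cover of $t$ provided by wideness supplies the perspectivity we need.

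Since $S$ is wide, choose $c\prec t$ in $L$ with $c\notin\{a_l,a_r\}$. Being distinct lower covers of $t$, the elements $a_l,a_r,c$ are pairwise incomparable; in particular $c\not\geq a_l$ and $c\not\geq a_r$. In $L[S]$ the unique upper cover of $m$ is $t$, so every element above $m$ is $\geq t$; since $c<t$, this yields $c\not\geq m$. Conversely, the only $L$-elements $\leq m$ in $L[S]$ are those $\leq o$, and $c\not\leq o$ (else $a_l$ would lie strictly between $c$ and $t$). Hence $m$ and $c$ are incomparable in $L[S]$ and, both being covered by $t$, satisfy $m\vee c=t$. Semimodularity of $L[S]$ then gives $m\wedge c\prec m$ and $m\wedge c\prec c$, so $[m,t]$ is up-perspective to $[m\wedge c,\,c]$, and
\[
\bgg(S)\;=\;\consub{L[S]}{m,t}\;=\;\consub{L[S]}{m\wedge c,\,c}.
\]

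The crux is to show $m\wedge c=o\wedge c$, i.e., that $m\wedge c$ already lies in $L$. This reduces to verifying that no element introduced during the construction of $L[S]$ is $\leq c$. The three primary new elements $b_l,b_r,m$ have minimal $L$-upper bound $a_l,a_r,t$ respectively, none of which is $\leq c$. For a secondary element $y$ added by the cascading step, induction on the cascading shows that the minimal $L$-upper bound of $y$ is a lower cover (in $L$) of either $a_l$ or $a_r$, and using the planar SPS structure of $L$ one verifies that no such element can be $\leq c$ either. I expect this structural bookkeeping to be the main obstacle of the proof, since it is the only point at which the planarity and slimness of $L$ are essentially used (beyond the semimodularity of $L[S]$, which is built in).

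Granted $m\wedge c=o\wedge c$, the interval $[o\wedge c,\,c]$ has both endpoints in $L$ and is prime in $L[S]$, hence also prime in $L$. Setting $\bga=\consub{L}{o\wedge c,\,c}$, we conclude
\[
\bgg(S)\;=\;\consub{L[S]}{o\wedge c,\,c}\;=\;\overline{\bga},
\]
exhibiting $\bgg(S)$ as generated by a congruence of $L$, which is precisely what Theorem~\ref{T:wide} asserts.
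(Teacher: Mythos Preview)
Your argument has a genuine gap, and in fact the key claim is false.

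First, the sentence ``Semimodularity of $L[S]$ then gives $m\wedge c\prec m$ and $m\wedge c\prec c$'' misapplies semimodularity: upper semimodularity asserts that $a\wedge b\prec a$ implies $b\prec a\vee b$, not the converse. Two distinct lower covers of $t$ need not cover their meet. More importantly, your central reduction, that no new element of $L[S]$ lies below $c$ (so that $m\wedge c=o\wedge c$), is simply false. Take $L=\SfS 7$ with atoms $u,v$, coatoms $p,q,r$ (left to right), top $1$, and let $S=\{u,p,q,1\}$, so $o=u$, $a_l=p$, $a_r=q$, $t=1$, and the extra lower cover is $c=r$. The right wing of $[o,a_r]=[u,q]$ is $[u,q],[0,v]$, so the fork construction inserts a cascaded element $z_{r,2}$ with $0\prec z_{r,2}\prec v$ and $z_{r,2}\prec b_r$. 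Since $v=a_r\wedge c\prec c$, we obtain $z_{r,2}<c$; a direct check gives $m\wedge c=z_{r,2}\notin L$, while $o\wedge c=u\wedge r=0$. Thus $m\wedge c\neq o\wedge c$, and neither $[m\wedge c,c]$ nor $[o\wedge c,c]$ is prime. Your assertion that ``no such element can be $\leq c$'' fails precisely because the lower cover $a_r\wedge c$ of $a_r$ does lie below $c$, and this element is exactly $x_{r,2}$, the top of the first cascaded prime interval on the $c$-side.

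The paper's proof sidesteps this entirely: it does not seek a prime interval of $L$ perspective to $[m,t]$. Instead, assuming $c$ lies to the right of $a_r$, it invokes Lemma~\ref{L:known} to obtain two cover-preserving $\SfS 7$ sublattices of $L[S]$, one generated by $\{a_l,m,a_r\}$ and one by $\{m,a_r,c\}$. In any $\SfS 7$ the congruence determined by the middle coatom lies below each side-coatom congruence; applying this in the first sublattice gives $\consub{L[S]}{m,t}\leq\consub{L[S]}{a_r,t}$, and in the second (where $a_r$ is now the middle) gives the reverse inequality. Hence $\bgg(S)=\consub{L[S]}{a_r,t}=\ol{\bga_r}(S)$, which is generated by the $L$-congruence $\bga_r(S)=\consub{L}{a_r,t}$.
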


\begin{theorem}\label{T:tight}
Let $L$ be an SPS lattice. 
Let $S =\set{o, a_l, a_r, t}$  be a \emph{tight square}. 
Then $L[S]$ has exactly one join-irreducible congruence, 
namely $\bgg(S) = \consub{L[S]}{m,t}$,
that is \emph{not generated} by a~congruence of $L$.
\end{theorem}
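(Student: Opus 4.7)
The plan has two parts corresponding to the two assertions of the theorem: (A) every join-irreducible congruence of $L[S]$ distinct from $\bgg(S)$ is generated by a congruence of $L$, and (B) $\bgg(S)$ itself is join-irreducible but is not so generated.

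For (A), I would use the observation in the Introduction that every prime interval of $L[S]$ is perspective to a prime interval of $L$ except for those projective, inside $L[S]$, to $[m,t]$. Hence if $\fq$ is a prime interval of $L[S]$ outside the projectivity class of $[m,t]$, there is a prime interval $\fp$ of $L$ with $\consub{L[S]}{\fq}=\consub{L[S]}{\fp}=\ol{\con{\fp}}$, exhibiting the join-irreducible congruence $\consub{L[S]}{\fq}$ as generated by the congruence $\con{\fp}$ of $L$.

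For (B), join-irreducibility of $\bgg(S)$ is clear since it is generated by the prime interval $[m,t]$. To show that it is not generated by any congruence of $L$, I observe that if $\bga$ is a congruence of $L$ with $\ol\bga = \bgg(S)$, then $\bga \le \bgg(S)\restr L$, and so $\bgg(S) = \ol\bga \le \ol{\bgg(S)\restr L} \le \bgg(S)$, forcing the equality $\ol{\bgg(S)\restr L} = \bgg(S)$. It therefore suffices to prove the stronger claim $\bgg(S)\restr L = \zero_L$, whose extension $\ol{\zero_L} = \zero_{L[S]}$ certainly does not identify $m$ with $t$.

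To establish $\bgg(S)\restr L = \zero_L$, I would start from the base collapses $(m,t)$, $(b_l,a_l)$, $(b_r,a_r)$ inside the $\SfS 7$ gadget; each has a newly inserted element as an endpoint. The tight square hypothesis makes the $L$-elements in the interval $[o,t]$ precisely $\{o,a_l,a_r,t\}$. A direct case analysis of $z \jj b_l$, $z \mm b_l$ for $z \in L$ (and analogously for $b_r$ and $m$) then shows that these operations return endpoints already lying in $\SfS 7$ or give trivial pairs, so propagating the base collapses under join, meet, and transitivity never produces a non-trivial pair both of whose entries lie in $L$. The cascaded insertions should be handled inductively: each new element $y$ sits inside a covering square $T$ of $L$ and plays, relative to $T$, the same role that $m$ plays relative to $S$, so the same kind of calculation applies.

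The main obstacle I foresee is controlling the cascade step, which adds further ``middle'' prime intervals to the projectivity class of $[m,t]$. Handling these rigorously will likely require an inductive description of $\bgg(S)$ along the construction of $L[S]$, carefully separating the prime intervals of $L[S]$ into ``boundary'' intervals (perspective to prime intervals of $L$) and ``middle'' intervals (in the projectivity class of $[m,t]$), and verifying that the tight square hypothesis prevents any middle interval from becoming projective to an $L$-interval at any stage of the construction.
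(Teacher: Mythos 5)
Your part (A) and the opening reduction of part (B) (reduce to showing $\consub{L[S]}{\bgg(S)\restr L}<\bgg(S)$) are sound and match the paper's setup. The fatal step is the ``stronger claim'' $\bgg(S)\restr L=\zero_L$: this is \emph{false} in general, and the paper's Sections~\ref{S:Protrusion}--\ref{S:gammaL} exist precisely because it is false. Tightness only constrains the lower covers of $t$; it says nothing about the elements $x_{l,k}$ further down the chain $C_l$. If some $x_{l,k}$ covers a third element $q_1$ to the left of $C_l$ (a \emph{protrusion}), then in $L[S]$ the element $x_{l,k}$ covers the three adjacent elements $q_1$, $x_{l,k+1}$, $z_{l,k}$, which by Lemma~\ref{L:known} generate a cover-preserving $\SfS 7$ with top $x_{l,k}$ and middle coatom $x_{l,k+1}$. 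Since $\bgg(S)$ collapses the right side $[z_{l,k},x_{l,k}]$ of that $\SfS 7$, a computation inside $\SfS 7$ shows it must also collapse $[x_{l,k+1},x_{l,k}]$ and (via the covering square on $y_{l,k+1},y_{l,k},z_{l,k+1},z_{l,k}$) the interval $[y_{l,k+1},y_{l,k}]$ --- prime intervals of $L$. This is exactly what the classes \eqref{E:iii}--\eqref{E:v} of the paper's description of $\bgg$ record: $\bgg(S)\restr L$ is the protrusion congruence $\bgp=\consub{L}{y_{l,k},y_{l,k+1}}$, nontrivial whenever a protrusion exists (Figure~\ref{F:notextend} is the smallest instance). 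So your ``direct case analysis'' cannot close up: propagating the base collapses does produce nontrivial pairs lying entirely in $L$, and your closing assertion that tightness prevents any middle interval from becoming projective to an $L$-interval is the same error restated.

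What actually has to be proved is that $\ol\bgp=\consub{L[S]}{\bgg(S)\restr L}$ is still strictly smaller than $\bgg(S)$, i.e., that collapsing the protrusion pairs of $L$ inside $L[S]$ does not collapse $[m,t]$. That is the content of the explicit, protrusion-by-protrusion description of $\bgg(S)$ built in Sections~\ref{S:Protrusion}--\ref{S:gammaL} and of Lemma~\ref{L:G}. To repair your argument you would need to replace ``$\bgg(S)\restr L=\zero_L$'' by the two claims ``$\bgg(S)\restr L=\bgp$'' and ``$\ol\bgp<\bgg(S)$,'' and supply a description of $\bgg(S)$ (or an equivalent projectivity analysis) strong enough to verify both; in the special case of a covering square with no protrusions your argument does go through, and there it reproduces Lemma~\ref{L:distr}.
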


We now state the most important property of $\bgg(S)$:

\begin{theorem}\label{T:uppercover}
Let $S$ be a tight square in an SPS lattice $L$. 
Then the congruence $\bgg(S)$ of $L[S]$ is covered 
by one or two congruences in the order of join-irreducible
congruences of $L[S]$, namely,
by $\consub{L[S]}{a_l, t}$ and $\consub{L[S]}{a_r, t}$.
\end{theorem}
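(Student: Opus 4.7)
The plan is to combine Theorem~\ref{T:tight} with a perspectivity argument inside the covering square $S$. First I would check that $\consub{L[S]}{a_l,t}$ and $\consub{L[S]}{a_r,t}$ really are join-irreducible congruences of $L[S]$ lying strictly above $\bgg(S)$. Since $S$ is a covering square of $L$, the intervals $[a_l,t]$ and $[a_r,t]$ are prime intervals of $L$, so the remark in the Introduction makes both $\consub{L[S]}{a_l,t}=\ol{\consub{L}{a_l,t}}$ and $\consub{L[S]}{a_r,t}=\ol{\consub{L}{a_r,t}}$ join-irreducible in $L[S]$. The chain $a_l\le m\le t$ in $L[S]$ gives $(m,t)\in\consub{L[S]}{a_l,t}$, whence $\bgg(S)\le\consub{L[S]}{a_l,t}$; strictness uses the explicit description of $\bgg(S)$ developed earlier in the paper (in particular, $(a_l,t)\notin\bgg(S)$). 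The right-hand inequality is symmetric.

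Next, by Theorem~\ref{T:tight}, every join-irreducible congruence of $L[S]$ other than $\bgg(S)$ has the form $\ol\bga=\consub{L[S]}{\fp}$ for some prime interval $\fp$ of $L$. So to prove the theorem it suffices to show: whenever a join-irreducible congruence $\bga$ of $L$ satisfies $\ol\bga>\bgg(S)$, we have $\ol\bga\ge\consub{L[S]}{a_l,t}$ or $\ol\bga\ge\consub{L[S]}{a_r,t}$. Using the perspectivities $[o,a_r]\sim[a_l,t]$ and $[o,a_l]\sim[a_r,t]$ inside the square $S$ of $L$ (which give $\consub{L}{o,a_r}=\consub{L}{a_l,t}$ and $\consub{L}{o,a_l}=\consub{L}{a_r,t}$), this reduces to showing $\bga\restr S\ne\zero_S$ whenever $\ol\bga>\bgg(S)$.

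I would handle this by contrapositive: if $\bga\restr S=\zero_S$, then $(m,t)\notin\ol\bga$. By Theorem~\ref{T:extension}(ii), $\bga$ admits a minimal extension to $L[S]$, so $\ol\bga\restr L=\bga$. The core task is then to show that $\ol\bga$ does not collapse any of the four ``new'' prime intervals $[m,b_l]$, $[m,b_r]$, $[b_l,t]$, $[b_r,t]$ of $L[S]$; that would keep $m$, $b_l$, $b_r$, $t$ in four distinct $\ol\bga$-classes, forcing $(m,t)\notin\ol\bga$ and contradicting $\ol\bga>\bgg(S)$.

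The principal obstacle is exactly this last step: a careful case analysis of how the minimal extension of a square-trivial $\bga$ propagates through the $S_7$-fork and the surrounding portion of $L[S]$. The tight-square hypothesis should be decisive here, since it forbids $t$ from covering in $L$ any element other than $a_l$ and $a_r$; this prevents prime intervals of $L$ outside $S$ from being perspective in $L[S]$ into the fork region and thereby forcing the collapse of any of $(m,b_l)$, $(m,b_r)$, $(b_l,t)$, $(b_r,t)$. Once this obstruction is cleared, every join-irreducible congruence strictly above $\bgg(S)$ in $L[S]$ dominates $\consub{L[S]}{a_l,t}$ or $\consub{L[S]}{a_r,t}$, and since both of these are themselves join-irreducible and lie just above $\bgg(S)$, they are precisely the (one or two) upper covers of $\bgg(S)$ in the join-irreducible order of $L[S]$.
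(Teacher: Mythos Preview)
Your overall strategy is sound and, once the loose ends are tied, gives a correct proof that is genuinely different from the paper's. Two remarks.

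\medskip
\textbf{The ``principal obstacle'' is already done.} You reduce everything to: if $\bga\restr S=\zero_S$ then $(m,t)\notin\ol\bga$. You then propose a fresh case analysis on how $\ol\bga$ propagates through the $\SfS 7$ fork. That analysis is unnecessary. Look back at the proof of Theorem~\ref{T:extension}(ii): the minimal extension $\bgb$ is defined there by an explicit partition, and $\set{m}$ is a singleton block of that partition (this is the second line of~\eqref{E:zero}). Since the proof concludes with $\bgb=\ol\bga$, you get $(m,t)\notin\ol\bga$ for free. So what you flagged as the main obstacle is a one-line citation, not a new argument. (Incidentally, your list of ``four new prime intervals'' is garbled: in $\SfS 7$ we have $b_l\prec m$, $b_r\prec m$, $m\prec t$, so $[b_l,t]$ and $[b_r,t]$ are not prime; but as just noted you do not need this list at all.)

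\medskip
\textbf{Comparison with the paper.} The paper does not go through Theorem~\ref{T:extension}(ii). Instead it first isolates the set $G$ of prime intervals $\fp$ of $L[S]$ with $\con\fp=\bgg(S)$ (Lemma~\ref{L:G}, which rests on the full description of $\bgg(S)$ in Sections~\ref{S:noprotrusions}--\ref{S:gammaL}), and then argues directly with a minimal congruence-projectivity chain ending in $G$ (Lemma~\ref{L:uppercover}): analyzing the last step of such a chain forces $\ol\bga_l(S)\le\con\fp$ or $\ol\bga_r(S)\le\con\fp$. Your route trades that chain analysis for the already-proved extension theorem and a perspectivity argument inside $S$; it is shorter and avoids re-touching the structure of $\bgg(S)$ except for the single fact $(a_l,t)\notin\bgg(S)$, which is immediate from \eqref{E:i}. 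The paper's route, on the other hand, yields the slightly sharper Lemma~\ref{L:uppercover} (stated for an arbitrary prime interval $\fp$ of $L$, with the comparison $\bga_l(S)\le\con\fp$ already down in $L$), which may be of independent use.
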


In Section \ref{S:Congruences}, we recall some concepts and results
on congruences of finite lattices we need in this paper. 
Some basic facts about SPS lattices are stated in Section~\ref{S:SPS}.
We introduce the notation 
for the fork construction in Section~\ref{S:forks}.
In Section~\ref{S:Extending}, 
we analyze which congruences of $L$ extend to $L[S]$.
In~ Section~\ref{S:wide}, we verify Theorem~\ref{T:wide}.
In describing $\bgg(S)$, \emph{protrusions} on a fork cause the problems;
these are introduced in Section~\ref{S:Protrusion}.

In Section \ref{S:noprotrusions}, we describe $\bgg(S)$ provided that
$S$ has no protrusions.
We describe $\bgg(S)$ on a part of $L[S]$ in Section~\ref{S:gammaK}.
Utilizing these result, in Section~\ref{S:gammaL}
we verify Theorem~\ref{T:tight}.
Finally, in Section~\ref{S:uppercovers}, 
applying the description of $\bgg(S)$ developed 
in Section~\ref{S:Protrusion}, we prove Theorem~\ref{T:uppercover}.

In Section~\ref{S:comments}, we state some open problems.

We will use the notations and concepts of lattice theory, 
as in \cite{LTF}.

We will use the traditional approach to planarity: 
a planar lattice is a lattice with a planar diagram, unspecified.
A number of recent papers (especially by G.~Cz\'edli) 
use a more rigorous approach. 
For an overview of this new approach, 
see G. Cz\'edli and G. Gr\"atzer~\cite{CGa}.
For example, Lemma~\ref{L:cproj} uses the concept of ``adjacency'';
this seems to be diagram dependent, but in the context it is not.

\section{Congruences of lattices}\label{S:Congruences}

As~illustrated in Figure~\ref{F:cong2}, 
we say that $[a,b]$ is \emph{up congruence-perspective} 
to $[c,d]$ and write $[a,b] \cperspup [c,d]$\label{GoN:cperspup} 
if $a \leq c$ and $d = b \jj c$; similarly,
$[a,b]$ is \emph{down congruence-perspective} to $[c,d]$ 
and write $[a,b] \cperspdn [c,d]$ if $d \leq b$ and $c = a \mm d$.
If $[a,b] \cperspup [c,d]$ \emph{or} $[a,b] \cperspdn [c,d]$, 
then $[a,b]$ is \emph{congruence-perspective} to $[c,d]$ 
and we write $[a,b] \cpersp [c,d]$.

\begin{figure}[tbh]
  \centerline{\includegraphics{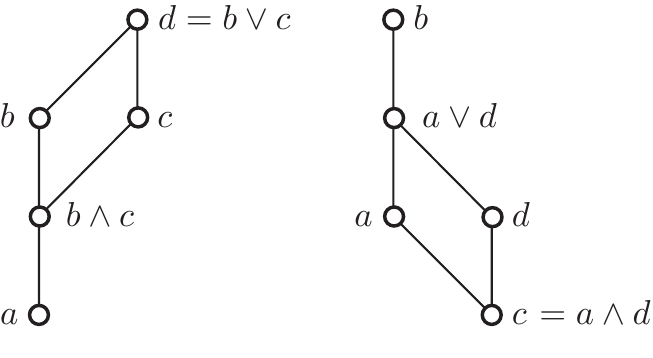}}
  \caption{$[a,b] \cperspup [c, d]$ and $[a,b] \cperspdn [c, d]$}\label{F:cong2}
\end{figure}

If for some natural number~$n$ 
and intervals $[e_i, f_i]$, for $0 \leq i \leq n$,
\[
   [a,b] = [e_0,f_0] \cpersp [e_1,f_1] \cpersp 
      \cdots \cpersp [e_n,f_n] = [c,d],
\]
then we call $[a,b]$ \emph{congruence-projective}
to $[c,d]$, and we write $[a, b] \cproj [c, d]$. 

We now state a classic result in J.~Jakubik~\cite{jJ55} in a special case, 
see also, \cite[Lemma 238]{LTF}.

\begin{lemma}\label{L:cproj}
Let L be a finite lattice, $a \leq b$ in $L$, 
and let $\fq$ be a prime interval.
Then $\fq$ is collapsed by $\con{a, b}$
if{f} $[a, b] \cproj \fq$.
In fact, there is a prime interval $\fp$ in $[a,b]$ 
such that $\fp \cproj \fq$.
\end{lemma}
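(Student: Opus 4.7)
My plan is to treat the biconditional as two implications and then prove the strengthening.

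For the forward implication, I would induct on the length $n$ of the sequence $[a,b]=[e_0,f_0]\cpersp\cdots\cpersp[e_n,f_n]=\fq$. The base $n=0$ is the defining property of $\con{a,b}$. Each $\cpersp$-step preserves being collapsed: if $[x,y]\cperspup[x',y']$ (so $x\le x'$, $y'=y\jj x'$) and $x\equiv y\pmod{\con{a,b}}$, then $x'=x\jj x'\equiv y\jj x'=y'$; the $\cperspdn$ case is dual.

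For the reverse implication, I would verify the Dilworth--Funayama--Nakayama description of $\con{a,b}$. Define a relation $\Theta$ on $L$ as follows: for $x\le y$, set $x\,\Theta\,y$ iff there is a chain $x=z_0\le z_1\le\cdots\le z_m=y$ with each $[z_{i-1},z_i]$ congruence-projective from $[a,b]$; extend by $x\,\Theta\,y\iff (x\mm y)\,\Theta\,(x\jj y)$. Routine checks show $\Theta$ is reflexive and symmetric, transitive by concatenating chains, satisfies the substitution property because $[u,v]\cperspup[u\jj w,v\jj w]$ and dually $[u\mm w,v\mm w]\cperspdn[u,v]$, and contains $(a,b)$. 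Thus $\Theta$ is a congruence containing $(a,b)$, so $\con{a,b}\subseteq\Theta$, while the forward implication gives the reverse inclusion. Hence $\Theta=\con{a,b}$, and for the prime interval $\fq=[u,v]$ the only non-trivial chain from $u$ to $v$ is $u\prec v$, forcing $[a,b]\cproj\fq$.

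For the strengthened ``in fact'' clause, I would iterate the following refinement step backwards along the projection sequence: if $[c,d]\cpersp[c',d']$ and $[u,v]\subseteq[c',d']$ is prime, then there is a prime $\fp'\subseteq[c,d]$ with $\fp'\cproj[u,v]$ in two $\cpersp$-steps. In the $\cperspup$ case we have $c\le c'\le u$ and $d'=d\jj c'$; along any maximal chain $c=c_0\prec c_1\prec\cdots\prec c_k=d$, the sequence $c_i\jj u$ is monotone with $c_0\jj u=u$ and $c_k\jj u=d\jj u\ge d\jj c'=d'\ge v$, so there is a smallest index $i$ with $c_i\jj u\ge v$. Primeness of $[u,v]$ then forces $(c_{i-1}\jj u)\mm v=u$, so setting $\fp':=[c_{i-1},c_i]$ yields $\fp'\cperspup[c_{i-1}\jj u,c_i\jj u]\cperspdn[u,v]$. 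The $\cperspdn$ case is dual, with the map $x\mapsto x\jj u$ replaced by $x\mapsto x\mm v$. Iterating this refinement from $\fq$ back through the given projection produces a prime $\fp\subseteq[a,b]$ with $\fp\cproj\fq$.

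The main obstacle is the refinement step: the combination of a monotone scan along a maximal chain with the two-step $\cpersp$-factorization. Primeness of $[u,v]$ is used in exactly one place---to pin the meet $(c_{i-1}\jj u)\mm v$ down to $u$ rather than some intermediate element---and arranging this cleanly in both the $\cperspup$ and $\cperspdn$ cases is the only delicate piece of bookkeeping.
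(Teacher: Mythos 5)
The paper does not actually prove this lemma: it is quoted as a classic result of Jakubik, with a pointer to \cite[Lemma 238]{LTF}. So there is no internal argument to compare against; what you have written is a self-contained proof of the cited result, and it is essentially the standard textbook one. Your three pieces are all sound: the forward direction is the usual induction using the substitution property; the reverse direction is the Dilworth-style description of $\con{a,b}$ via chains of intervals congruence-projective from $[a,b]$, with primeness of $\fq$ collapsing the chain to a single link; and the refinement step for the ``in fact'' clause (scanning a maximal chain of $[c,d]$ under $x\mapsto x\jj u$, locating the first cover $c_{i-1}\prec c_i$ whose image reaches $v$, and using primeness to force $(c_{i-1}\jj u)\mm v=u$, giving $[c_{i-1},c_i]\cperspup[c_{i-1}\jj u,c_i\jj u]\cperspdn[u,v]$) is exactly the classical argument, correctly dualized. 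The only place you lean on ``routine checks'' a bit heavily is in verifying that $\Theta$ is a congruence: transitivity for \emph{incomparable} pairs and the full substitution property both require the convexity observation that if $u\le u'\le v'\le v$ and $u\mathbin{\Theta}v$, then the chain witnessing $u\mathbin{\Theta}v$ can be squeezed into $[u',v']$ by $z\mapsto(z\jj u')\mm v'$, each new link being congruence-projective from the old one. That step is standard and unproblematic, but it is the one check that is not literally ``concatenating chains,'' and a complete write-up should include it (or simply cite the known description of principal congruences, as the paper does).
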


The following technical lemma, 
see G. Gr\"atzer \cite{gG13f}, 
plays a crucial role in the computations in this paper.

\begin{lemma}\label{L:technical}
Let $L$ be a finite lattice. 
Let $\bgd$ be an equivalence relation on $L$
with intervals as equivalence classes.
Then $\bgd$ is a congruence relation if{}f the following condition 
and its dual hold:
\begin{equation}\label{E:cover}
\text{For $a \prec b$,  $a \prec c$, and $b \neq c$ in $L$,
if $a \equiv c\,(\tup{mod}\, \bgd)$,
then $c \equiv b \jj c\,(\tup{mod}\,\bgd)$.}\tag{C${}_{\jj}$}
\end{equation}
\end{lemma}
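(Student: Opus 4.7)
The forward implication is immediate: if $\bgd$ is a congruence and the hypotheses of (C${}_\jj$) hold, then joining $a \equiv c \pmod{\bgd}$ with $b$ yields $b \equiv b \jj c \pmod{\bgd}$, and since $c \not\geq b$ (forced by $b \neq c$ together with $a \prec b,c$), this gives $c \equiv b \jj c \pmod{\bgd}$. The dual argument handles the dual condition.

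For the reverse direction, assume (C${}_\jj$) and its dual. By duality, it suffices to verify the join substitution property. Given $x \equiv y \pmod{\bgd}$, the interval-class assumption places both $x \mm y$ and $x \jj y$ in the same class, so by sandwiching $x \jj a$ and $y \jj a$ between $(x\mm y) \jj a$ and $(x \jj y) \jj a$ the task reduces to proving $u \jj a \equiv v \jj a \pmod{\bgd}$ whenever $u \leq v$ and $u \equiv v \pmod{\bgd}$. Taking a maximal chain in the $\bgd$-class from $u$ to $v$ and invoking transitivity once more reduces further to the \emph{covering case}: if $u \prec v$ and $u \equiv v \pmod{\bgd}$, show $u \jj a \equiv v \jj a \pmod{\bgd}$ for every $a \in L$.

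To handle the covering case, I fix a maximal chain $u = c_0 \prec c_1 \prec \cdots \prec c_m = u \jj a$ in $L$ and prove by induction on $i$ that $c_i \equiv v \jj c_i \pmod{\bgd}$; the case $i = m$ gives the conclusion. The base case is the hypothesis. In the inductive step, let $D$ be the $\bgd$-class of $c_i$. If $v \jj c_i = c_i$, the step is trivial; otherwise, the first link of any maximal chain in $L$ from $c_i$ up to $v \jj c_i$ yields an element $d \in D$ with $c_i \prec d$ (since $D$ is an interval and hence chain-convex). Either $d = c_{i+1}$, putting $c_{i+1} \in D$ directly, or (C${}_\jj$) applied to $c_i \prec c_{i+1}$, $c_i \prec d$, $d \neq c_{i+1}$ gives $d \equiv c_{i+1} \jj d \pmod{\bgd}$, so that $c_{i+1}$ lies in $D$ by the interval property (as $c_i \leq c_{i+1} \leq c_{i+1} \jj d$, with both endpoints in $D$). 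Once $c_{i+1}$ and $v \jj c_i$ are both in $D$, so is their join $v \jj c_{i+1} = c_{i+1} \jj (v \jj c_i)$, closing the induction.

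The main obstacle is precisely this inductive step: one must exhibit an element that \emph{covers $c_i$ in $L$} while remaining inside the equivalence class $D$, so as to match the exact cover-format of the hypothesis of (C${}_\jj$), and then leverage the interval structure of $D$ to absorb $c_{i+1}$. Without the interval-classes assumption the argument breaks, since one could no longer infer equivalence of $c_{i+1}$ from equivalences of bracketing elements above and below.
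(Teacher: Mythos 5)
The paper does not actually prove this lemma; it is quoted from \cite{gG13f}, so your argument can only be measured against the statement and the way it is used later in the paper. That matters, because the statement as printed carries a typo: the hypothesis of (C${}_{\jj}$) should be $a \equiv b \pmod{\bgd}$, not $a \equiv c \pmod{\bgd}$ (equivalently, keep the printed hypothesis and change the conclusion to $b \equiv b \jj c$). Every application in the paper confirms this; for instance, in the proof of Theorem~\ref{T:extension}(ii): ``let $a \prec b$, $a \prec c$, $b \neq c$, and $\cng a = b (\bgb)$ \dots\ we want to prove that $\cng c = b \jj c (\bgb)$.'' Read literally, the printed ``only if'' direction is false: in $\SC 2^2 = \set{0,x,y,1}$ the congruence with classes $\set{0,y}$ and $\set{x,1}$ has interval classes, yet for $a=0$, $b=x$, $c=y$ we have $a \equiv c$ while $y \not\equiv 1 = b \jj c$. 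Your forward direction breaks exactly here: from $a \equiv c$ you correctly derive $b = a \jj b \equiv b \jj c$, but the closing step ``since $c \not\geq b$, this gives $c \equiv b \jj c$'' is a non sequitur --- knowing the class of $b$ tells you nothing about the class of $c$, and the example shows the desired conclusion can genuinely fail. (With the intended hypothesis $a \equiv b$, the forward direction really is one line: join with $c$.)

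Your reverse direction is the substantive half, and its skeleton --- reduce join-substitution to comparable congruent pairs, then to covering pairs $u \prec v$, then climb a maximal chain from $u$ to $u \jj a$ applying (C${}_{\jj}$) at each link and using convexity of the classes to absorb $c_{i+1}$ --- is sound, and it is internally correct for the condition as you read it. Note, however, that you use the literal form at the key step, obtaining $d \equiv c_{i+1} \jj d$ and hence $c_{i+1} \jj d \in D$, which is what lets you squeeze $c_{i+1}$ into $D$. The corrected hypothesis yields instead $c_{i+1} \equiv c_{i+1} \jj d$, placing $c_{i+1} \jj d$ in the class of $c_{i+1}$ rather than in $D$; the squeeze no longer closes the induction as written, and one needs a secondary induction (say on the length of $[c_i, v \jj c_i]$, climbing from $c_{i+1} \jj d$ up to $v \jj c_{i+1}$) to finish. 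Since the literal condition is strictly stronger than the intended one given interval classes (your own convexity argument shows literal (C${}_{\jj}$) implies the corrected version), what you have proved in this direction is weaker than what the paper invokes each time it verifies (C${}_{\jj}$) and (C${}_{\mm}$) to certify a congruence. So: correct the statement, replace the forward argument by the trivial one, and patch that single step of the climb.
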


We denote by (C${}_{\mm}$) the dual of (C${}_{\jj}$).

\section{SPS lattices}\label{S:SPS}
The following statements can be found in the literature 
(see G. Gr\"atzer and E.~Knapp~\cite{GKn07}--\cite{GKn10}, 
G.~Cz\'edli and E.\,T. Schmidt~\cite{CS12}--\cite{CS12a}); 
for a survey of this field see G. Cz\'edli and G. Gr\"atzer \cite{CGa}
and G. Gr\"atzer \cite{gG14a}.

\begin{lemma}\label{L:known}
Let $L$ be an SPS lattice. 
\begin{enumeratei}
\item An element of $L$ has at most two covers.

\item Let $a \in L$. 
Let $a$ cover the three elements $x_1$, $x_2$, and $x_3$.
Then the set $\set{x_1,x_2,x_3}$ generates an $\SfS 7$ sublattice.

\item If the elements $x_1$, $x_2$, and $x_3$ are adjacent, 
then the $\SfS 7$ sublattice of \tup{(ii)} is a cover-preserving sublattice.
\end{enumeratei}
\end{lemma}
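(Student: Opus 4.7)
The plan is to dispatch each of the three clauses of Lemma~\ref{L:known} by a standard structural argument, essentially collecting what is already present in the papers of Gr\"atzer--Knapp and Cz\'edli--Schmidt cited just above the lemma.

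For clause~(i), I would argue by contradiction: suppose $a\in L$ had three distinct upper covers $y_1,y_2,y_3$, arranged from left to right in a planar diagram. Semimodularity forces each pairwise join $y_i\jj y_j$ to cover both $y_i$ and~$y_j$. If all three joins coincide, then $\set{a,y_1,y_2,y_3,y_1\jj y_2}$ is an $\SM 3$ sublattice, contradicting slimness; if they do not all coincide, planarity is violated, because the middle cover $y_2$ cannot sit consistently beneath two distinct joins of the flanking upper covers.

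For clause~(ii), the plan is to start from three lower covers $x_1,x_2,x_3$ of $a$ and compute the sublattice they generate. Dual semimodularity supplies, for each pair, an element $x_i\mm x_j$ covered by $x_i$ and $x_j$, and all three pairwise meets lie above the common meet $x_1\mm x_2\mm x_3$. Slimness forbids $\SM 3$, which rules out the case that all three pairwise meets collapse to the common meet. A short inspection of the remaining configurations identifies the sublattice as the seven-element $\SfS 7$ of Figure~\ref{F:s7}: two of the $x_i$'s meet in the bottom $o$, while the surviving nontrivial pairwise meets play the role of $b_l$ and $b_r$, and $t=a$ sits on top.

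For clause~(iii), the abstract $\SfS 7$ sublattice obtained in~(ii) must be upgraded to a cover-preserving one by invoking adjacency. The idea is that, with $x_1,x_2,x_3$ appearing consecutively in a planar cut, no element of $L$ can sit strictly between $x_1\mm x_2\mm x_3$ and a pairwise meet, or between a pairwise meet and one of the $x_i$'s, without destroying either planarity or slimness. Hence every covering relation of the generated $\SfS 7$ is already a covering relation of $L$. This last step is the one I expect to be the main obstacle: translating the informal notion of adjacency into a rigorous absence of intermediate elements is the diagram-dependent issue flagged at the end of the introduction, and it is precisely where the planar formalism developed by Cz\'edli does the heavy lifting.
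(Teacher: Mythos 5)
First, a point of comparison: the paper does not prove Lemma~\ref{L:known} at all---it is presented as a list of known facts with citations to G.~Gr\"atzer and E.~Knapp and to G.~Cz\'edli and E.\,T.~Schmidt---so your sketch has to stand entirely on its own. It does not, and the decisive flaw is in clause~(ii): you invoke \emph{dual} semimodularity to produce, for each pair, an element $x_i \mm x_j$ covered by both $x_i$ and $x_j$. SPS lattices are not dually semimodular, and the very lattice $\SfS 7$ that clause~(ii) is supposed to produce is the counterexample: taking $a = t$, $x_1 = a_l$, $x_2 = m$, $x_3 = a_r$ in Figure~\ref{F:s7}, the meet $x_1 \mm x_3 = o$ sits two steps below $x_1$ (namely $o \prec b_l \prec a_l$), so it is not covered by $x_1$. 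Your own concluding picture (``two of the $x_i$'s meet in the bottom $o$'' while the other pairwise meets play the role of $b_l$ and $b_r$) contradicts the premise you start from: dual semimodularity would force $o \prec x_1$, yet $o < b_l < x_1$. Note also that clause~(ii) itself shows the dual of clause~(i) to be \emph{false} in SPS lattices---an element may well have three lower covers---so no dualization of the argument for (i) can be expected to carry (ii). A correct proof has to work with upper semimodularity and planarity directly, e.g., first showing $x_1 \mm x_3 \leq x_2$ by a Kelly--Rival-type planarity lemma and then identifying the generated sublattice element by element.

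Two further weaknesses. In clause~(i), the case where the pairwise joins do not coincide is dispatched with ``planarity is violated,'' which is not an argument; the actual content is that planarity forces the middle cover $y_2$ below $y_1 \jj y_3$, and since semimodularity makes $[a, y_1 \jj y_3]$ an interval of length $2$, we get $y_2 \prec y_1 \jj y_3$ and hence the forbidden $\SM 3$ on $\set{a, y_1, y_2, y_3, y_1 \jj y_3}$ (in particular the joins are then forced to coincide, so your second case collapses into the first rather than yielding a separate contradiction). Clause~(iii) you explicitly leave to the adjacency formalism, i.e., unproved. Since the lemma is quoted from the literature, the honest course here is to cite the precise statements in the Cz\'edli--Schmidt and Gr\"atzer--Knapp papers rather than to supply a sketch whose central step for (ii) is invalid.
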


Finally, we state the \emph{Structure Theorem for SPS Lattices} of
G. Cz\'edli and E.\,T. Schmidt~\cite{CS12}:

\begin{theorem}\label{T:Structure Theorem}
Let $L$ be an SPS lattice. 
There exists a planar distributive lattice~$D$
such that $L$ can be obtained from $D$ by a series 
of fork insertions.
\end{theorem}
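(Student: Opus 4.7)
The plan is induction on $|L|$. For the base case, suppose $L$ contains no $\SfS 7$ sublattice. By Lemma~\ref{L:known}(ii), this forces every element of $L$ to have at most two covers with no triple-cover configuration, and combined with slimness (no $\SM 3$) and semimodularity, a routine local argument shows $L$ is a distributive planar lattice; we take $D = L$ with no fork insertions. So assume $L$ contains an $\SfS 7$ sublattice, which by Lemma~\ref{L:known}(iii) we may take to be cover-preserving.

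The inductive step requires isolating an $\SfS 7$ that was ``inserted last'' so that contracting it yields a smaller SPS lattice $L'$ with $L \cong L'[S]$ for some covering square $S$ of $L'$. I would select an $\SfS 7$ whose middle atom $m$ is maximal in a linear extension compatible with the planar diagram (equivalently, extremal among candidate fork-middles), let $o, a_l, a_r, t, b_l, b_r$ be its other elements as in Figure~\ref{F:s7}, and set $S = \set{o, a_l, a_r, t}$. The lattice $L'$ is then formed by removing $m$ together with the set $Y$ of elements that were forced by the cascading clause of a fork insertion at $S$; these are identified recursively from $m$ outward by tracing chains $u \prec y \prec v$ whose covering-square geometry (adjacency and perspectivity to the growing fork in the diagram) matches the signature of a cascaded element.

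The principal obstacle is characterizing $Y$ cleanly and verifying that (a) $L' = L \setminus (\set{m} \cup Y)$ is again an SPS lattice in which $S$ reappears as a covering square, and (b) the fork construction applied to $L'$ at $S$ reproduces $L$ exactly, including each cascaded element of $Y$ in its original position. Part (a) is a local check using semimodularity, slimness, and Lemma~\ref{L:known}(iii): no new $\SM 3$ can appear, and the only covers lost are the ones involving the deleted elements. Part (b) is a confluence-style argument: the cascade rule must regenerate precisely the same elements when re-run in $L'$, which should follow because the location of each forced element is determined solely by which covering squares $T$ become non-covering upon reinsertion of $m$, and this data is intrinsic to $L \setminus (\set{m} \cup Y)$ and hence to $L'$.

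Once the reduction is established, the induction hypothesis applied to $L'$ yields a planar distributive lattice $D$ and a series of fork insertions producing $L'$ from $D$; appending the insertion at $S$ produces $L$, completing the proof. I expect the extremal choice of $m$ and the matching bookkeeping of $Y$ to be the genuinely delicate step --- everything else is local semimodular geometry --- which is why the induction must be driven by a carefully chosen ``last'' fork rather than by any arbitrary $\SfS 7$ in $L$.
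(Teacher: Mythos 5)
First, a point of order: the paper does not prove this statement. It is the Structure Theorem of G.~Cz\'edli and E.\,T.~Schmidt, quoted from \cite{CS12} without proof, so your proposal can only be measured against the published argument, not against anything in this text. Your overall strategy --- induct on $|L|$, with the distributive case as base, and otherwise peel off a ``last-inserted'' fork --- is indeed the strategy of the known proof, and your base case is acceptable once you cite, rather than wave at, the fact that a slim planar semimodular lattice with no cover-preserving $\SfS 7$ is modular and hence distributive.

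The genuine gap is exactly where you place ``the principal obstacle,'' and it is not a bookkeeping issue: you never establish that a removable fork exists, nor do you give a usable rule for finding one. ``An $\SfS 7$ whose middle atom $m$ is maximal in a linear extension compatible with the planar diagram'' is not a well-defined criterion, and no maximality condition on $m$ alone can do the job, because removability of a fork is destroyed by later insertions whose cascading clause runs through its legs. Concretely: insert a fork at the bottom covering square of $\SC 3\times\SC 3$ and then a second fork at the upper-left square; the second cascade places new elements inside the intervals $[b_l,m]$ and $[o,b_r]$ of the first fork, after which deleting $\{m\}\cup Y$ for the first fork and re-inserting a fork at $S$ does \emph{not} reproduce $L$ --- the regenerated leg elements acquire the wrong covers. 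Similar examples with the two middles incomparable and of equal height show that the candidate fork must be recognized by an intrinsic structural property (its legs still descend to the boundary through an undisturbed $\SC 3\times\SC n$ grid), and one must \emph{prove} that such a candidate exists whenever $L$ is not distributive. That existence-and-characterization argument, together with the verification that the deletion yields an SPS lattice $L'$ with $L\iso L'[S]$, is the entire content of the theorem; asserting that (a) is ``a local check'' and (b) ``should follow'' by confluence leaves it unproved. As it stands this is a plan pointed in the right direction, not a proof.
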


Of course, a planar distributive lattice is 
just a sublattice of a direct product of~two chains.

For a planar lattice $L$, define 
a \emph{left corner} (resp., \emph{right corner}) 
as a doubly-irreducible element in $L - \set{0,1}$ 
on the left (resp., right) boundary of~$L$. 
G. Gr\"atzer and E. Knapp~\cite{GKn09} 
define a \emph{rectangular lattice} $L$ 
as a planar semimodular lattice 
which has exactly one left corner, $\lcorner(L)$, 
and exactly one right corner, $\rcorner(L)$, 
and they are complementary---that is, 
$\lcorner(L) \jj \rcorner(L) = 1$ 
and $\lcorner(L) \mm \rcorner(L) = 0$. 

Let us call a rectangular lattice $L$ a \emph{patch lattice} 
if $\lcorner(A)$ and $\rcorner(A)$ are dual atoms. 
The lattice $\SfS 7$, see Figure~\ref{F:s7}, is an example of a slim patch lattice.

\begin{corollary}\label{C:Structure Theorem}
Let $L$ be a patch lattice.
Then $L$ can be obtained from the four-element Boolean lattice, $\SC 2^2$, by a series of fork insertions.
\end{corollary}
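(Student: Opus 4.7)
The strategy is to reduce the corollary to Theorem~\ref{T:Structure Theorem}. That theorem yields a planar distributive lattice $D$ together with a sequence of fork insertions producing $L$; the task is to show that we may arrange for $D \cong \SC 2^2$.

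The first and main step is to verify that the patch property is preserved under fork insertion, or equivalently, that un-forking a patch lattice yields a patch lattice. The idea is that inserting a fork at a covering square $S$ only places new elements into the interiors of already-existing covering squares of the diagram; even the ``secondly''-clause ripple insertions lie strictly inside pre-existing squares, never on the outer boundary. Hence the top element, the bottom element, and their cover sets are unchanged at every stage of the construction, so the two dual atoms $\lcorner$ and $\rcorner$ of $L$ persist all the way down through the fork sequence. The delicate point, and the main obstacle, is confirming that no ripple insertion sneaks an element onto the outer boundary in a way that would multiply the corners or demote them below dual atom status; this requires a careful bookkeeping of the ripple clause of the Cz\'edli--Schmidt construction, using the slim, planar, semimodular structure already exploited in Lemma~\ref{L:known}. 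Once settled, a straightforward induction on the number of forks yields that $D$ itself is a patch lattice.

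Once $D$ is known to be a planar distributive patch lattice, it remains to observe that the only such lattice is $\SC 2^2$. Writing $D \cong \mathcal{J}(P)$ for the poset $P$ of its join-irreducibles (Birkhoff's representation), the dual atoms of $D$ correspond to the maximal elements of $P$. The patch hypothesis forces $P$ to have exactly two maximal elements $p_1, p_2$, and the rectangular condition $\lcorner(D) \mm \rcorner(D) = 0$ translates into $P \setminus \{p_1, p_2\} = \emptyset$. Thus $P$ is a two-element antichain and $D \cong \SC 2^2$, completing the argument.
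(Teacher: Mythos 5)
Your overall strategy---reduce to Theorem~\ref{T:Structure Theorem}, show that the patch property transfers between $L$ and $L[S]$, and then identify $\SC 2^2$ as the only planar distributive patch lattice---is the natural one, and the final step is essentially fine: the two corners of a distributive patch lattice $D$ are dual atoms, hence of the form $P - \set{p_1}$ and $P - \set{p_2}$ for distinct maximal elements $p_1, p_2$ of the poset $P$ of join-irreducibles, and $\lcorner(D) \mm \rcorner(D) = 0$ forces $P = \set{p_1, p_2}$, whence $D \cong \SC 2^2$. (The paper gives no proof at all, treating the corollary as immediate from the Structure Theorem together with the analysis of patch lattices in G.~Cz\'edli and E.\,T.~Schmidt~\cite{CS12a}, so there is no ``official'' argument to compare against; yours is the expected reduction.)

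The genuine gap is in the justification of your main step. The claim that a fork insertion ``only places new elements into the interiors of already-existing covering squares, never on the outer boundary,'' so that ``the top element, the bottom element, and their cover sets are unchanged,'' misdescribes the construction. Every fork insertion puts new elements on the outer boundary: the left wing of $S$ terminates in a prime interval $[y_{l,n_l}, x_{l,n_l}]$ lying on the left boundary (Section~\ref{S:forks} notes explicitly that trajectories begin and end on the boundary), and the new element $z_{l,n_l}$ is inserted into precisely that boundary interval; symmetrically for $z_{r,m_r}$ on the right. Moreover, if $t = 1$, the cover set of the top element does change, since $m$ becomes a new dual atom. So the boundary-invariance on which you rest the whole induction is false, and the asserted equivalence between ``forking preserves patch'' and ``un-forking preserves patch'' (only the latter is what the corollary actually needs) is not established. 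What rescues the lemma is a different observation: no new element of $F[S] \uu \set{m}$ is doubly irreducible ($m$ covers both $b_l$ and $b_r$, and each $z_{l,i}$ is covered by the two elements $x_{l,i}$ and $z_{l,i-1}$, with the convention $z_{l,0} = m$), so no new corner can arise; and an element of $L$ is a doubly irreducible dual atom of $L$ if and only if it is one of $L[S]$, because the only element of $L$ that acquires an additional cover is $t$, which is already join-reducible. With that substitute argument the induction closes; as written, your key lemma does not go through.
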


\section{The fork construction}\label{S:forks}

Let $L$ be an SPS lattice. 
Let $S =\set{o, a_l, a_r, t}$ be a covering square of $L$,
let $a_l$ be to the left of $a_r$
We need some notation for the $L[S]$ construction, 
see Figure~\ref{F:forkdetails}. 
 
Two prime intervals of $L$ are \emph{consecutive}
if they are opposite sides of a covering square. 
As in G.~Cz\'edli and E.\,T.\ Schmidt~\cite{CS12},
a maximal sequence of  
consecutive prime intervals form a \emph{trajectory}. 
So a trajectory is an equivalence class of the transitive reflexive closure of the ``consecutive'' relation. 

Consider the trajectory containing (determined by) a prime interval $\fp$; the part of the trajectory to the left of $\fp$ (including $\fp$)
is called the \emph{left wing} of $\fp$; 
see G.~Cz\'edli and G. Gr\"atzer \cite{CG12}. 
The left wing of $[o,a_l]$ is also called the \emph{left wing} of
the covering square $S =\set{o, a_l, a_r, t}$.
We define the \emph{right wing} symmetrically. Note that trajectories start and end in prime intervals on the boundary. So a left wing starts with
a prime interval on the boundary. 

\begin{figure}[h]
\centerline{\includegraphics{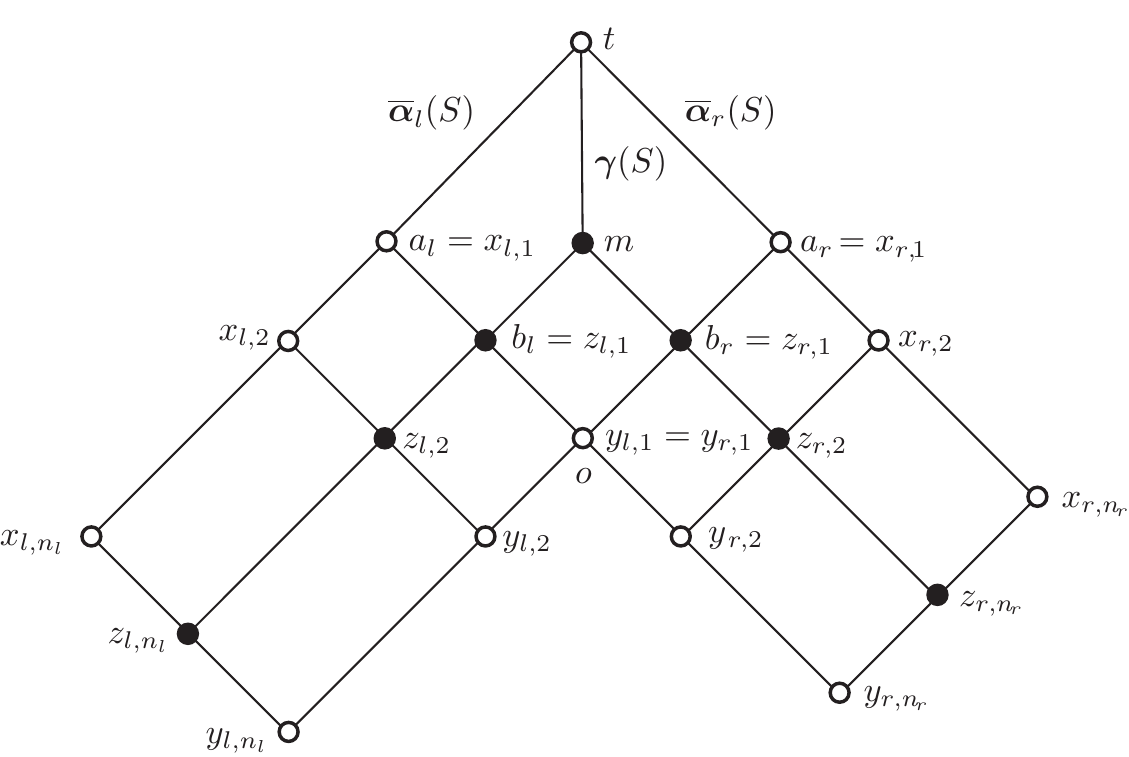}}
\caption{Notation for the fork construction}\label{F:forkdetails}
\end{figure}

Let $a_l = x_{l,1}$, $o = y_{l,1}$, and let
\begin{equation}\label{E:leftwing}
   [y_{l,1}, x_{l,1}], [y_{l,2}, x_{l,2}], \dots, [y_{l,n_l}, x_{l,n_l}]
\end{equation} 
be the left wing of $S$. Note that $[y_{l,n_l}, x_{l,n_l}]$ is on the left boundary and these prime intervals in Figure~\ref{E:leftwing}
form a sublattice of the form $\SC 2 \times \SC {n_l}$.

We add the elements $z_{l,1}$, $z_{l,2}$, \dots, $z_{l,n_l}$ so that 
$y_{l,i} \prec z_{l,i} \prec x_{l,i}$, for $i = 1, \dots, n_l$ and 
$z_{l,n_l} \prec \dots \prec z_{l,2} \prec z_{l,1}$. 
So now we have a sublattice of the form $\SC 3 \times \SC n$.
We proceed symmetrically on the right with $n_r$ pairs.

Finally, we add the element $m$,
so that the set $\set{o, b_l, b_r, a_l, a_r, m, t}$ 
forms an $\SfS 7$ sublattice. Note that $m = b_l \jj b_r$. 

It is easy to compute that we obtain an extension $L[S]$ of $L$.
Let
\[
   F[S] = \set{m,z_{l,1} \succ \dots \succ z_{l,n_l}, 
          z_{r,1} \succ \dots \succ z_{r,m_r}}
\]
be the set of new elements;
they are black filled in Figure~\ref{F:forkdetails}.  

\begin{lemma}\label{L:easy}
Let $L$ be an SPS lattice with the covering square~$S$.
Then $L[S]$ is an SPS lattice and $L$ is a sublattice. 
Therefore, every element $x$ of $L[S]$ 
has an upper cover $x^+$ and a lower cover $x^-$ in $L$.
\end{lemma}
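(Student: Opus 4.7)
My plan is to decompose Lemma~\ref{L:easy} into its three asserted pieces---that $L[S]$ is an SPS lattice, that $L$ embeds as a sublattice of $L[S]$, and that every $x\in L[S]$ has an upper cover $x^+$ and a lower cover $x^-$ in $L$---and treat them in that order. The first is already due to G.~Cz\'edli and E.\,T.~Schmidt~\cite{CS12a}, so I would simply cite it. The remaining two assertions both reduce to inspecting the new elements produced by the fork construction described in Section~\ref{S:forks}.

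First I would specify, for each new element $v\in L[S]\setminus L$, explicit candidates for $v^+$ and $v^-$ by reading them off Figure~\ref{F:forkdetails}: set $m^+ = t$ and $m^- = o$; $b_l^+ = a_l$ and $b_l^- = o$, symmetrically on the right; $z_{l,i}^+ = x_{l,i}$ and $z_{l,i}^- = y_{l,i}$, symmetrically for $z_{r,j}$; and for an element $y$ inserted by the propagation rule inside a covering square $T=\set{x,z,u,w}$ of $L$, set $y^+=z$ and $y^-=x$. To justify these as the smallest $L$-element above $v$ and the largest $L$-element below $v$, I would observe that each new element is incomparable with its ``sibling'' $L$-elements (for instance, $m$ is incomparable with both $a_l$ and $a_r$), a fact visible from the $\SfS 7$-shape of a fork together with the at-most-two-covers property from Lemma~\ref{L:known}(i).

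With $v^+$ and $v^-$ in hand, showing that $L$ is a sublattice is short. Given $x,y\in L$ and $z=x\jj^{L}y$, the element $z$ is clearly an upper bound of $\set{x,y}$ in $L[S]$; for the converse, an arbitrary upper bound $v\in L[S]$ is either an $L$-element (so $v\geq z$ in $L$) or a new element, and in the latter case $x,y\leq v$ forces $x,y\leq v^-$, hence $v\geq v^-\geq z$. Meets are handled dually. The final assertion is then immediate: for $x\in L$ set $x^+=x^-=x$, and for $x\in L[S]\setminus L$ use the explicit formulas above.

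I expect the main obstacle to be the propagation rule: the new elements $y$ introduced in step~2 are defined by a recursive procedure, and the incomparability claims must remain valid at every stage. I would handle this by induction on the order in which the propagation inserts elements, exploiting that each $T$ is a covering square of the \emph{original} lattice $L$, so that by the time $y$ is inserted the surrounding $L$-structure is unchanged and the argument used for the ``primary'' new elements $m, b_l, b_r, z_{l,i}, z_{r,j}$ transfers verbatim to~$y$.
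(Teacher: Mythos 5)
The paper gives no proof of this lemma: the claim that $L[S]$ is an SPS lattice is credited to G.~Cz\'edli and E.\,T.~Schmidt~\cite{CS12a}, and the rest is covered by the preceding sentence ``It is easy to compute that we obtain an extension $L[S]$ of $L$.'' Your proposal is a correct filling-in of that omitted computation, and your reading of $x^+$ and $x^-$ as the \emph{least} element of $L$ above $x$ and the \emph{greatest} element of $L$ below $x$ (rather than literal covering elements) is the right one --- for $x=m$ both elements covered by $m$, namely $b_l$ and $b_r$, are new, yet the proof of Theorem~\ref{T:extension}(i) uses precisely the equivalence ``$x\in[u,v]_{L[S]}$ iff $u\le x^-$ and $x^+\le v$,'' which only makes sense under your interpretation. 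The one step I would tighten is the justification that, say, $y_{l,i}$ is the \emph{greatest} element of $L$ below $z_{l,i}$: incomparability with the ``sibling'' elements is not quite the statement you need; rather, identify the full set of $L$-elements below $z_{l,i}$ as the principal ideal of $y_{l,i}$, by induction down the chain $m\succ z_{l,1}\succ\dots\succ z_{l,n_l}$, using that the lower covers of $z_{l,i}$ in $L[S]$ are exactly $y_{l,i}$ and $z_{l,i+1}$ (and similarly for $m$ and for the upper bounds). With that and its dual in place, your sublattice argument and the final assertion go through.
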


We name a few join-irreducible congruences of $L$ and $L[S]$ 
that will play an important role. 

Join-irreducible congruences in $L$:
\begin{align}
\bga_l(S) &= \consub{L}{a_l,t},\\
\bga_r(S) &= \consub{L}{a_r,t}. 
\end{align}

Join-irreducible congruences in  $L[S]$, see Figure~\ref{F:forkdetails}:
\begin{align}
\ol\bga_l(S) &= \consub{L[S]}{a_l,t},\\ 
\ol\bga_r(S) &= \consub{L[S]}{a_r,t},\label{F:alphabarright}\\ 
\bgg(S) &= \consub{[L[S]}{m,t}. 
\end{align}

\section{Extending congruences}\label{S:Extending}

In this section we prove Theorem~\ref{T:extension}.

\begin{proof}[Proof of \emph{Theorem~\ref{T:extension}\lp i\rp}]

Let $\bga$ be a congruence of~$L$ satisfying $\bga \restr S = \one_S$.
We define the partition:
\[
  \bgp =  \setm{[u,v]_{L[S]}}{\text{$u, v \in L$ and }[u,v]_L 
  \text{\ is a congruence class of $\bga$}}.
\]
To verify that $\bgp$ is indeed a partition of $L[S]$, let
\[
   A =  \UUm{[u,v]_{L[S]}}{\text{$u, v \in L$ and }[u,v]_L 
        \text{\ is a congruence class of $\bga$}}.
\]
Clearly, $L \ci A$. 
By assumption, $[o, t]$ is in a
congruence class of $\bga$, so there is a 
congruence class $[u,v]_{L}$ containing $o$ and $t$. 
Hence  $[u,v]_{L[S]} \in \bgp$. 
Since $\cng o=a_l (\bga)$, 
and so $\cng x_{l_i}= y_{l_i} (\bga)$ for $i = 1, \dots, n_l$,
therefore, there is a congruence class $[u_i,v_i]_{L}$ 
containing $x_{l,i}$ and $ y_{l,i}$ for $i = 1, \dots, n_l$.
Hence $z_{l,i} \in [u_i,v_i]_{L[S]} \ci A$ 
and symmetrically. This proves that $A = L$.

Next we observe that $x$ belongs to a $\bgp$-class if{f} so do $x^+$ and $x^-$. This implies that the sets in $\bgp$ are pairwise disjoint.

Finally, we verify the substitution properties. 
Let $a,b,c \in L[S]$ and $\cng a=b(\bgp)$.
Then there exist $u,v \in L$ with  $\cng u=v(\bga)$ such that  $a,b \in [u,v]_{L[S]}$. 
There is also an interval $[u',v']_{L[S]} \in \bgp$ with $c \in [u',v']_{L[S]}$.
Since $\cng u=v(\bga)$ and  $\cng u'=v'(\bga)$, it follows that 
$\cng u\jj u'=v \jj v'(\bga)$, so there is a congruence class $[u'',v'']$ of $\bga$ 
containing $u\jj u'$ and $v \jj v'$. 
So $u\jj u', v \jj v' \in [u'',v'']_{L[S]}$, 
verifying the substitution property for joins. 
The dual proof verifies the substitution property for meets.

So $\bgp$ is a congruence of $L[S]$. Clearly, $\bgp = \ol \bga$.
The uniqueness statement is obvious.
\end{proof}

Curiously, Lemma~\ref{L:technical} would not simplify this proof.

\begin{proof}[Proof of \emph{Theorem~\ref{T:extension}\lp ii\rp}]
Now let $\bga$ be a congruence of~$L$ 
satisfying $\bga \restr S = \zero_S$.
We are going to define a  congruence $\bgb$ 
of $L[S]$ extending $\bga$.

For $i = 1, \dots, n_l$, define $\ul i$ 
as the smallest element in $\set{1, \dots, n_l}$
satisfying $\cng x_{l,i} = x_{l,\ul i}(\bga)$; 
let $\ol i$ be the largest one. Clearly, $\ul i \leq i \leq \ol i$.
Similarly, by a slight abuse of notation, 
we define $\ol i$ and $\ul i$ on the right.
 
We define $\bgb$ as the partition:
\begin{align}\label{E:zero}
  &\setm{[u,v]_{L[S]}}{[u,v]_L 
  \text{\ is a congruence class of $\bga$ with $u < v$}}\\
 & \uu \set{m}
  \uu \setm{[z_{l,\ul i}, z_{l,\ol i}]}{i=1, \dots, n_l}
   \uu \setm{[z_{r,\ul i}, z_{r,\ol i}]}{i=1, \dots, n_r}.\notag
\end{align}
To see that $\bgb$ is a partition, observe that it covers $L[S]$.
Note that if $u,v \in L$ with $u < v$ such that  $[u,v]_L$ 
is a congruence class of $\bga$,
then $[u,v]_L = [u,v]_{L[S]}$ unless $u < z_{l,i} < v$ 
for some $i=1, \dots, n_l$ or symmetrically. 
But in this case we would have that 
$\cng y_{l,i} = x_{l,i} (\bga)$
implying that $o = \cng y_{l,1} = x_{l,1} = a_l (\bga)$,
contrary to the assumption. Clearly, two distinct
$[z_{l,\ul i}, z_{l,\ol i}]$ classes cannot intersect 
by the definition of $\ul i$ and~$\ol i$.

So if two classes intersect, it must be a $[u,v]_{L[S]}$
and a $[z_{l,\ul i}, z_{l,\ol i}]$, which would contradict that
$[u,v]_{L[S]} = [u,v]_{L}$. 
The symmetric (on the right) 
and the mixed cases (left and right) complete the discussion.
So \eqref{E:zero} defines a partition $\bgb$.

To see that $\bgb$ is a congruence, we use Lemma~\ref{L:technical}.
To verify (C${}_\jj$), 
let $a \prec b$, $a \prec c \in L[S]$, $b \neq c$, 
and $\cng a = b (\bgb)$. We want to prove that $\cng c = b \jj c (\bgb)$.
By \eqref{E:zero}, either $\cng a=b(\bga)$ 
or $a, b \in [z_{l,\ul i}, z_{l,\ol i}]$ for some $i=1, \dots, n_l$
(or symmetrically).

First, let $a, b \in L$ and $\cng a=b(\bga)$. 
If $c \in L$, then $\cng c = b \jj c (\bga)$
since $\bga$ is a congruence, so by \eqref{E:zero}, $\cng c = b \jj c (\bgb)$.
 
If $c \nin L$, that is, if $c \in F[S]$,
then $c \neq m$ or $c = m$.
Clearly, $c \neq m$ cannot happen, 
since $m$ does not cover an element not in $F[S]$.
Therefore, $c = m$ and so $c = z_{l, i}$ 
for some $i=1, \dots, n_l$ (or symmetrically).
Since $a \prec c = z_{l, i}$ and $a \in L$, 
we get that $a = y_{l, i}$, $i > 1$ and $b = y_{l, i-1}$. 
It easily follows that $\cng c=c\jj b(\bga)$.

Second, let $a,b \in [z_{l,\ul i}, z_{l,\ol i}]$ for some $i=1, \dots, n_l$
(or symmetrically). Then $a = z_{l, j}$ and $b = z_{l, j-1}$
with $\cng z_{l, j}=z_{l, j-1}(\bga)$ by \eqref{E:zero}, 
and $c = x_{l, j}$. So $\cng c = b \jj c (\bgb)$ by \eqref{E:zero},
completing the verification of (C${}_\jj$).

To verify (C${}_\mm$), let $a \succ b$, $a \succ c \in L[S]$, $b \neq c$, 
and $\cng a = b (\bgb)$. We want to prove that $\cng c = b \mm c (\bgb)$.
By \eqref{E:zero}, either $a, b \in L$ and $\cng a=b(\bga)$ 
or $a, b \in [z_{l,\ul i}, z_{l,\ol i}]$ for some $i=1, \dots, n_l$
(or symmetrically).

First, let $a, b \in L$ and $\cng a=b(\bga)$. 
If $c \in L$, then $\cng c = b \mm c (\bga)$
since $\bga$ is a congruence, so by \eqref{E:zero}, 
$\cng c = b \mm c (\bgb)$. If $c \nin L$, that is, if $c \in F[S]$,
then $c \neq m$; indeed, if $c = m$, then $a = t$ and $b = a_l$, or symmetrically. 
But then $\cng a=b(\bga)$ would contradict the assumption that
$\bga \restr S = \zero_S$.

So $c = z_{l, i}$ for some $i=1, \dots, n_l$ (or symmetrically).
Since $a \succ c = z_{l, i}$ and $a \in L$, 
we get that $a = x_{l, i}$, $i > 1$. Now if $b = x_{l, i+1}$, 
then $\cng c=b\mm c(\bgb)$ easily follows from \eqref{E:zero}. 
So we can assume that $b \neq x_{l, i+1}$. 
Then by Lemma~\ref{L:known}(ii), the elements $b$, $x_{l, i+1}$,
and $c$ generate an $\SN 7$ sublattice; therefore,
$\cng c=b\mm c(\bgb)$ is easily computed in the $\SN 7$ sublattice. 

Second, let $a,b \in [z_{l,\ul i}, z_{l,\ol i}]$ for some $i=1, \dots, n_l$
(or symmetrically). Then $a = z_{l, j}$, $b = z_{l, j+1}$
and $c = y_{l, j}$. By the definition of $\ul i$ and $\ol i$, 
it follows that $\cng z_{l, j}= z_{l, j+1} (\bga)$,
which trivially implies that $\cng y_{l, j}= y_{l, j+1} (\bga)$,
that is, $\cng c= b \mm c (\bga)$, and so $\cng c= b \mm c (\bgb)$.

Clearly, $\bgb = \oa$.
\end{proof}

Note the similarity between the proofs of the conditions
(C${}_\jj$) and (C${}_\mm$).
Unfortunately, there is no duality.

\begin{proof}[Proof of \emph{Theorem~\ref{T:extension}\lp iii\rp}]
We need two examples. 
The first is trivial: 
Let $L = S = \SC 2^2$. Then $L[S] =\SfS 7$ 
and all congruences of $L$ extend to $L[S]$.

For the second, see Figure~\ref{F:notextend}. 
Let $\bga$ let be the congruence of $L$ collapsing two opposite sides of the covering square $S$. This congruence has exactly two nontrivial classes, marked in Figure~\ref{F:notextend} by bold lines.
In particular, $\cng a=b(\bga)$ fails.

Figure~\ref{F:notextend} also shows the congruence
$\ol \bga$ of $L[S]$.
Note that $\cng a=b(\oa)$ in $L[S]$, so $\bga$ has no extension to $L[S]$.
\end{proof}

\begin{figure}[t]
\centerline{\includegraphics{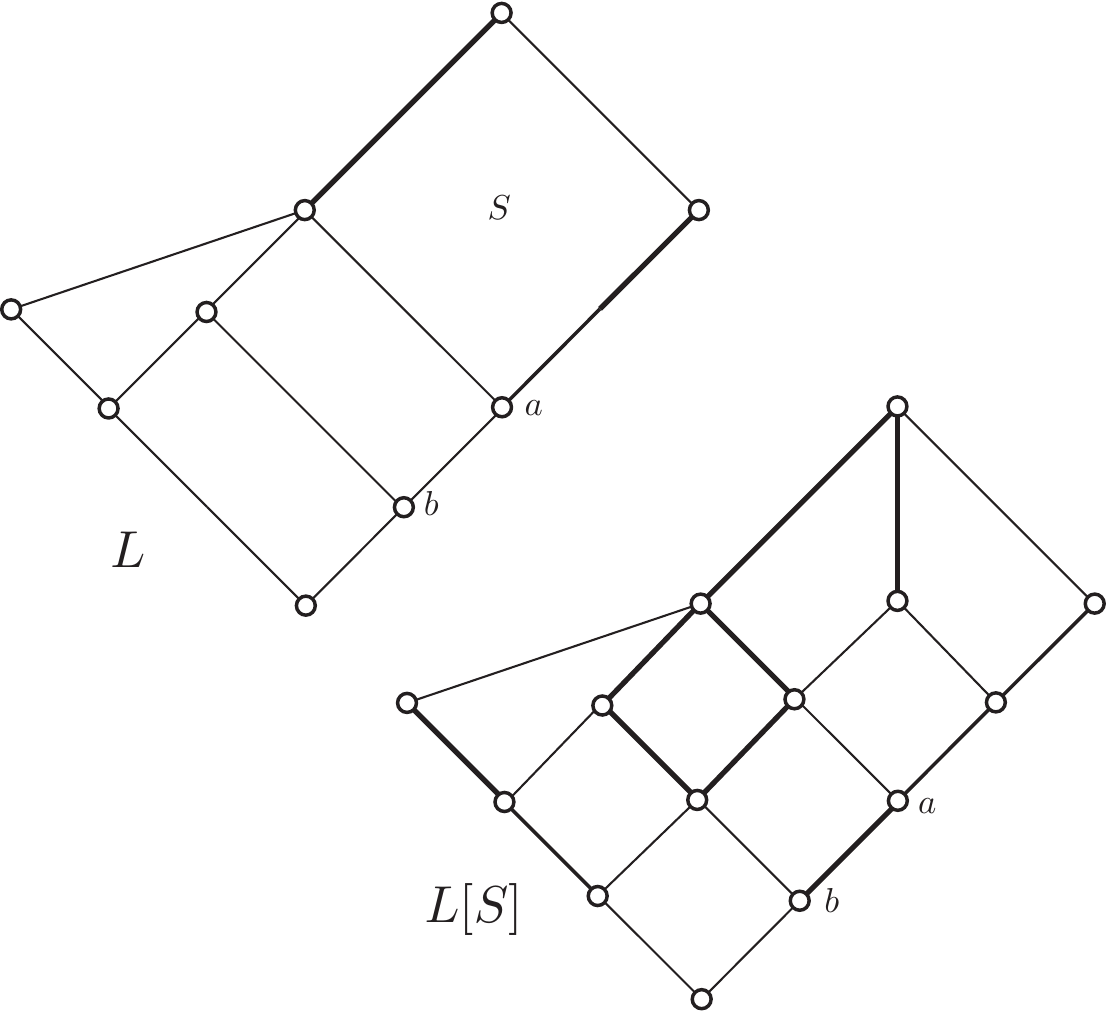}}
\caption{A congruence of $L$ that does not extend to $L[S]$}\label{F:notextend}
\end{figure}

Note that $\con{a, b}$ is the protrusion congruence $\bgp$
as defined in Section~\ref{S:gammaK}.

\section{Proof of Theorem~\ref{T:wide}}\label{S:wide}
In this section, we prove Theorem~\ref{T:wide}.

Since $S$ is wide, 
the element $t$ covers an element $a$ in $L$, with $a \neq a_l, a_r$.
Since $L$ is slim, 
either $a$ is to the left of $a_l$ or to the right of $a_r$;
let us assume the latter, see Figure~\ref{F:widesquare}.
By~Lemma~\ref{L:known}, the set $\set{a_l,a_r,a}$ 
generates an $\SfS 7$ sublattice in $L$.

Then $\con{m,t} \leq \con{a_r,t}$, 
computed in the $\SfS 7$ sublattice 
generated by the set $\set{a_l,m,a_r}$,
and $\con{m,t} \geq \con{a_r,t}$, 
computed in the $\SfS 7$ sublattice generated by $\set{m,a_r,a}$,
so we conclude that $\bgg(S) = \oa_r(S)$.

So by \eqref{F:alphabarright}, $\bgg(S)$ is generated by a congruence of $L$, namely by $\con{t,a}$.

\begin{figure}[p]
  \centerline{\includegraphics{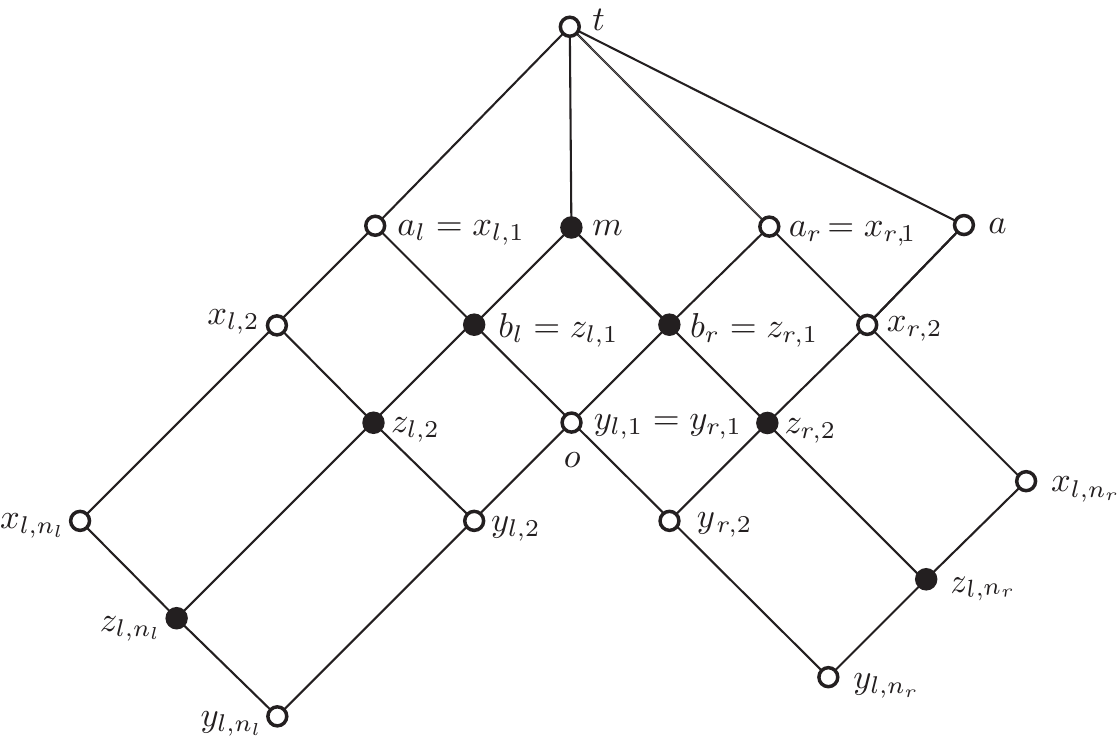}} 
  \caption{Wide square}\label{F:widesquare}
  
\bigskip

\bigskip

  \centerline{\includegraphics[scale=0.85]{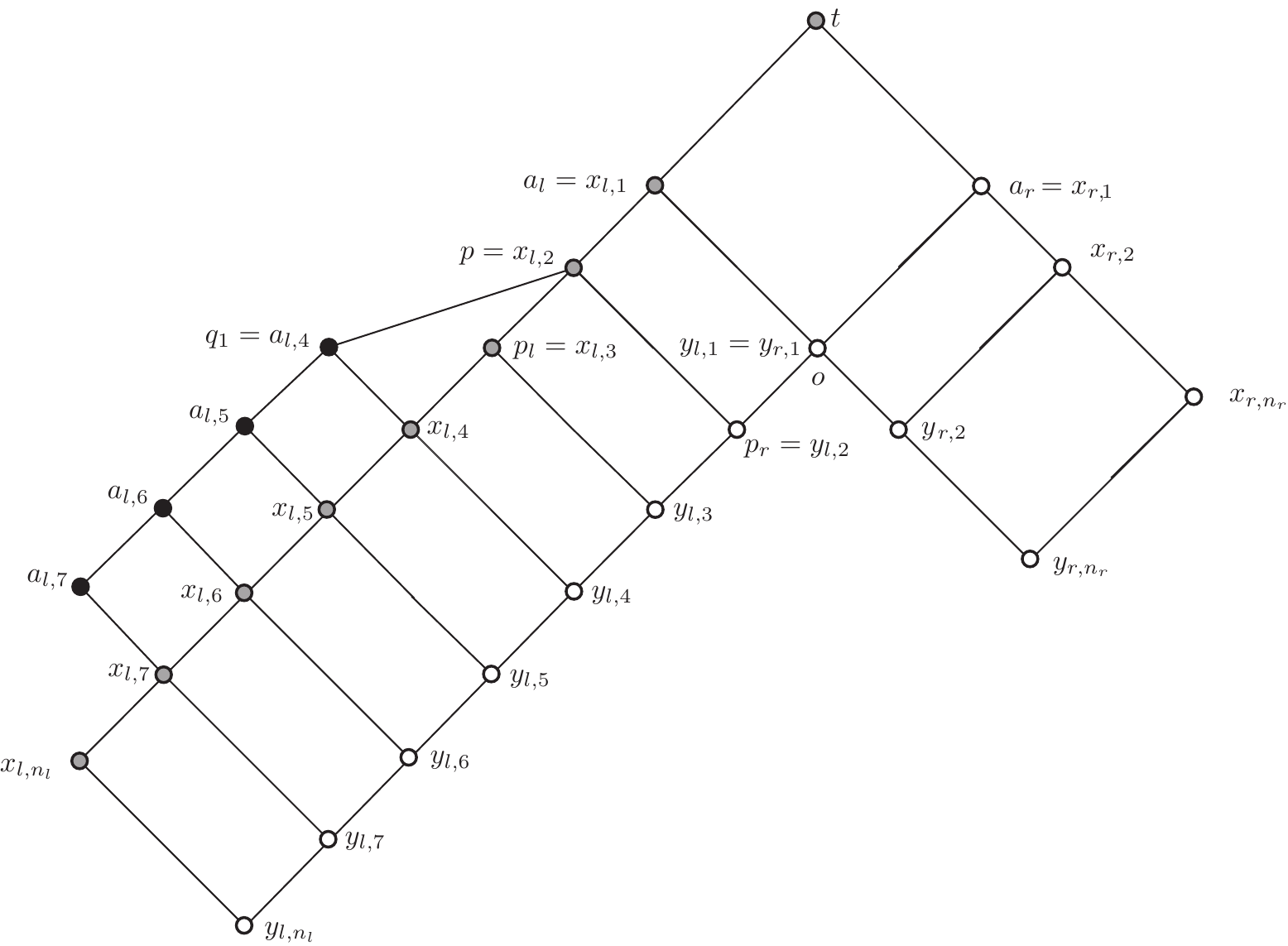}}
  \caption{The lattice $L$, the maximal chain $C$, gray filled, 
  a~protrusion at $p = x_{l,2}$ (with $p_l = x_{l,3}$, 
  $p_r = y_{l,2}$), and the protrusion $\Prot(p)$, black filled}
  \label{F:protrusion}
\end{figure}

\section{Protrusions}\label{S:Protrusion}
Let $L$ be an SPS lattice 
and let $S = \set{o, a_l, a_r, t}$ be a tight covering square of~$L$.
Let $C_l$ be the maximal chain 
$x_{l,n_l} \prec \dots \prec x_{l,1} \prec t$ in $[x_{l,n_l}, t]$,
the gray filled elements of Figure~\ref{F:protrusion}.
Recall that $x_{l,n_l}$, and $y_{l,n_l}$ are on the left boundary of $L$.
Let the chain $\ol C_l$ extend $C_l$ 
by continuing the chain on the left boundary of $L$.
So $\ol C_l$ extends to the zero of $L$.
We define $C_r$ and $\ol C_r$ symmetrically. 

Now we define a \emph{protrusion on $C_l$} 
or \emph{a left-protrusion of $S$}---and symmetrically. 
(Observe that $S$ determines $C_l$.)
If no element $x < t$ of $C_l$ covers an element to the left of $C_l$,
then there is no protrusion on $C_l$. 
If there is such an element $x < t$, pick the largest one, $p$.
Note that $p$ covers exactly two elements $p_l$ and $p_r$ 
on or to the right of $C_l$, $p_l$ to the left of $p_r$.
Now let $q_1$ be the element covered by $p$ 
to the left of $C_l$ and immediately to the left of $q_l$.
See Figure~\ref{F:protrusion}.

Let $p = x_{l,k}$. Then $p_l = x_{l,k+1}$ and $p_r = y_{l,k}$.

\begin{lemma}\label{L:nice!}
$q_1 \mm p_{l} = x_{l,k+2}$ and $x_{l,k+2} \prec p_l$.
\end{lemma}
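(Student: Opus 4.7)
The lemma's second assertion, $x_{l,k+2} \prec p_l$, is immediate from the definition of the chain $C_l$, along which $x_{l,k+2} \prec x_{l,k+1} = p_l$.

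For the first assertion, $q_1 \mm p_l = x_{l,k+2}$, the plan is to invoke Lemma~\ref{L:known} at $p$ to extract a cover-preserving $\SfS 7$ sublattice and then read off $q_1 \mm p_l$ inside it. The three elements $q_1, p_l, p_r$ are the lower covers of $p$, arranged left-to-right in the planar embedding, so by Lemma~\ref{L:known}(ii)--(iii) they generate a cover-preserving $\SfS 7$ sublattice $\mathfrak{S}$ with top $p$, middle coatom $p_l$, and outer coatoms $q_1, p_r$. In $\mathfrak{S}$ the element $b := q_1 \mm p_l$ plays the role of $b_l$ and is a lower cover of $p_l$ in $L$, while $p_l \mm p_r$ plays the role of $b_r$ and equals $y_{l,k+1}$ via the wing covering square $\set{y_{l,k+1}, y_{l,k}, x_{l,k+1}, x_{l,k}}$ in the $\SC 2 \times \SC {n_l}$ wing sublattice. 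Since $b \neq y_{l,k+1}$ inside $\mathfrak{S}$, the only remaining task is to rule out the scenario in which $b, x_{l,k+2}, y_{l,k+1}$ are three \emph{distinct} lower covers of $p_l$.

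To rule out that scenario, the plan is a descent along $C_l$. If those three elements are distinct, Lemma~\ref{L:known}(ii)--(iii) yields a second cover-preserving $\SfS 7$ sublattice $\mathfrak{S}'$ with top $p_l$, middle coatom $x_{l,k+2}$, and outer coatoms $b$ on the left and $y_{l,k+1}$ on the right; the meet $b \mm x_{l,k+2}$ then lies below $q_1$ and is either equal to the next chain element $x_{l,k+3}$ or a further left-of-$C_l$ lower cover of $x_{l,k+2}$. Iterating, an alleged cascade of $\SfS 7$'s descends along $C_l$ with middle coatoms $x_{l,k+2}, x_{l,k+3}, \dots$ and with left-of-$C_l$ coatoms at each level. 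Since $x_{l,n_l}$ sits on the left boundary of $L$, no element of $L$ lies strictly to its left, so this cascade must terminate with some middle-left meet collapsing onto the chain; reading the consequences back up then forces $b = x_{l,k+2}$, as desired. I expect the main obstacle to be making this iteration precise: verifying the adjacency hypothesis of Lemma~\ref{L:known}(iii) at every level, propagating the left-of-$C_l$ property faithfully through the meets $b \mm x_{l,k+j}$, and extracting the final contradiction at $x_{l,n_l}$ from planarity and the structure of $\ol{C}_l$.
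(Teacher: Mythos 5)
Your setup coincides with the paper's: the three adjacent lower covers $q_1$, $p_l$, $p_r$ of $p$ generate a cover-preserving $\SfS 7$ by Lemma~\ref{L:known}(ii)--(iii), with $q_1 \mm p_l$ in the role of $b_l$ and $p_l \mm p_r = y_{l,k+1}$ in the role of $b_r$, and the problem reduces to showing $q_1 \mm p_l = x_{l,k+2}$. The gap is in your mechanism for that last step, the descent along $C_l$. Its termination logic does not close: if at some level the meet ``collapses onto the chain'' (say $b \mm x_{l,k+2} = x_{l,k+3}$), this is perfectly consistent with $b$, $x_{l,k+2}$, $y_{l,k+1}$ being three distinct lower covers of $p_l$, and nothing ``reads back up'' to force $b = x_{l,k+2}$; you never say what the contradiction is. Nor does reaching $x_{l,n_l}$ on the left boundary obviously produce one, since $x_{l,n_l}$ may perfectly well have a lower cover distinct from $y_{l,n_l}$ --- it would simply be the next element of $\ol C_l$. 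So the alleged cascade, besides carrying all the adjacency and ``left-of-$C_l$'' bookkeeping you yourself flag as unverified, ends without extracting anything.

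The missing idea is that no descent is needed: the argument stays inside the single cover-preserving $\SfS 7$ at $p$. The paper observes that $q_1 \mm p_r < x_{l,k+2} \leq q_1 \mm p_l$; since the sublattice is cover-preserving, $q_1 \mm p_r \prec q_1 \mm p_l$ holds in $L$, so the sandwich forces $x_{l,k+2} = q_1 \mm p_l$ at once. (Equivalently, $x_{l,k+2}$ sits at co-height $2$ strictly between the bottom $q_1 \mm p_r$ of the $\SfS 7$ and its top $p$, and Lemma~\ref{L:known}(i) applied to $q_1 \mm p_r$ leaves $q_1 \mm p_l$ as the only candidate besides $y_{l,k+1}$.) One further remark: you are right that the second assertion, read literally as $x_{l,k+2} \prec p_l = x_{l,k+1}$, is immediate from the chain $C_l$; but what the paper actually extracts from the cover-preserving sublattice at this point, and what is used immediately afterwards to form the left wing of the prime interval $[x_{l,k+2}, q_1]$, is the relation $x_{l,k+2} \prec q_1$. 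A correct write-up should record that consequence explicitly, and your proposal does not.
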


\begin{proof}
Since $q_1$ is immediately to the left of $p_l$, 
it follows that $q_1$, $p_l$, and $p_r$ are adjacent.
By Lemma~\ref{L:known}(iii), they generate a cover-preserving $\SfS 7$ sublattice of $L$. This sublattice is 
\[
   \set{p, q_1, p_l, p_r,
      q_1 \mm p_l, p_l \mm p_r,q_1 \mm p_r}.
\]
Since this is a cover-preserving sublattice and 
$q_1 \mm p_r < x_{l,k+2} \leq q_1 \mm p_l$,
it follows that $q_1 \mm p_l = x_{l,k+2}$, as claimed.
The second statement is obvious now because the elements
$x_{l,k+2} < q_1$ are in a cover-preserving sublattice.
\end{proof}

Now we take the left wing of $[x_{l,k+2}, q_1]$:
\[
   [x_{l,k+2},a_{l,k+2}], \dots, [x_{l,k^*}, a_{l,k^*}],
\]
where $q_1 = a_{l,k+2}$ and $k \leq k^* \leq n_l$. 
Note that $[x_{l,k^*}, a_{l,k^*}]$ is on the left boundary of $L$.

The set $\New(p) = \set{a_{l,k+2}, \dots, a_{l,k^*}}$ 
forms the \emph{protrusion at $p$}; 
we may also call the element $p$ a protrusion.  
There is always one element in the protrusion, namely, $q_1 = a_{l,k+2}$. 
In Figure~\ref{F:protrusion}, 
$k = 2$ and $k^* = 7$; there are four elements 
in the protrusion, black filled.

%

\section{Covering squares with no protrusions}\label{S:noprotrusions}

Let $L$ be an SPS lattice and let $S$ be a covering square with no protrusion.
We define in $L[S]$ an equivalence relation $\bgd$ as follows.
All equivalence classes of $\bgd$ are singletons 
except for the following intervals:
\begin{align}\label{E:seti}
[z_{l,1}, x_{l,1}],& \dots, [z_{l,n_l}, x_{l,n_l}],\\ 
&[m, t], \label{E:setii}\\
[z_{r,1}, x_{r,1}],& \dots, [z_{r,m_r}, x_{r,m_r}].\label{E:setiii}
\end{align}

\begin{lemma}\label{L:distr}
Let $L$ be an SPS lattice and let $S$ be a covering square with no protrusion. 
Then $\bgd$ is a congruence relation of $L[S]$, 
in fact, $\bgd = \bgg(S)$. 
\end{lemma}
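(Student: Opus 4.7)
The plan is two-fold: first, verify that $\bgd$ is a congruence of $L[S]$ using Lemma~\ref{L:technical}; second, identify $\bgd$ with $\bgg(S)$ using Lemma~\ref{L:cproj}. The non-singleton classes of $\bgd$ are the prime intervals $[z_{l,i},x_{l,i}]$, $[m,t]$, and $[z_{r,i},x_{r,i}]$, all of which are intervals of $L[S]$, so Lemma~\ref{L:technical} applies and I need only check (C${}_\jj$) and its dual.

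For (C${}_\jj$), the condition has content exactly when $a$ is the bottom of some non-trivial class. With $a=z_{l,i}$ and $b=x_{l,i}$, the alternate upper cover $c$ of $a$ in $L[S]$ is $z_{l,i-1}$ for $i>1$ (supplied by the $\SC 3\times\SC{n_l}$ structure of the augmented left wing) or $m$ for $i=1$ (supplied by the $\SfS 7$ apex of the fork); the corresponding joins $b\jj c$ evaluate to $x_{l,i-1}$ and $t$ respectively, which lie in the same $\bgd$-classes as $c$, namely $[z_{l,i-1},x_{l,i-1}]$ and $[m,t]$. The case $a=m$ is vacuous since $t$ is the only upper cover of $m$, and the right wing is symmetric.

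For the dual (C${}_\wedge$) with $a=x_{l,i}$, $b=z_{l,i}$, the no-protrusion hypothesis is essential: for $1\leq i<n_l$, it guarantees that $x_{l,i}$'s lower covers in $L$ are exactly $x_{l,i+1}$ and $y_{l,i}$, so in $L[S]$ the only alternate lower cover is $c=x_{l,i+1}$, and the meet $b\mm c=z_{l,i+1}$ lies in $[z_{l,i+1},x_{l,i+1}]$ together with $c$. The boundary case $i=n_l$ is vacuous: since both $x_{l,n_l}$ and $y_{l,n_l}$ are on the left boundary, $x_{l,n_l}$'s only lower cover in $L[S]$ is $z_{l,n_l}$. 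For $a=t$, $b=m$, the alternate lower covers $a_l$ and $a_r$ yield meets $m\mm a_l=z_{l,1}$ and $m\mm a_r=z_{r,1}$, both in the expected wing classes.

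Having established that $\bgd$ is a congruence collapsing $[m,t]$, the inclusion $\bgg(S)\leq\bgd$ is automatic. For the reverse, by Lemma~\ref{L:cproj} it suffices to exhibit a congruence-projectivity from $[m,t]$ to each non-singleton class of $\bgd$. The chain
\[
[m,t]\cperspdn[z_{l,1},x_{l,1}]\cperspdn[z_{l,2},x_{l,2}]\cperspdn\cdots\cperspdn[z_{l,n_l},x_{l,n_l}]
\]
handles the left wing (the first step via $m\mm a_l=z_{l,1}$, each subsequent step via $z_{l,i}\mm x_{l,i+1}=z_{l,i+1}$ from the wing sublattice), and a symmetric chain handles the right wing. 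Combining the two inclusions gives $\bgd=\bgg(S)$. The main obstacle is the (C${}_\wedge$) verification at $a=x_{l,i}$: without no-protrusion, $x_{l,i}$ could cover an element to the left of $C_l$, and meeting $z_{l,i}$ with such a cover would typically land in a singleton $\bgd$-class, causing $\bgd$ to fail as a congruence. Avoiding this is precisely the role of the hypothesis, consistent with the separate treatment of protrusions in Section~\ref{S:Protrusion}.
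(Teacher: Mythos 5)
Your proposal is correct and takes essentially the same approach as the paper: the congruence property is checked via Lemma~\ref{L:technical}, with (C${}_{\jj}$) handled along the augmented wings and the fork apex, and the no-protrusion hypothesis invoked exactly where (C${}_{\mm}$) is tested at the lower covers of $x_{l,i}$. Your explicit verification that $\bgd=\bgg(S)$ via the chain of down congruence-perspectivities $[m,t]\cperspdn[z_{l,1},x_{l,1}]\cperspdn\cdots\cperspdn[z_{l,n_l},x_{l,n_l}]$ (and symmetrically on the right) supplies a step that the paper's proof leaves implicit, and it is correct.
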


\begin{proof}
By Lemma~\ref{L:technical}, 
we have to verify (C${}_{\jj}$) and (C${}_{\mm}$).
To verify (C${}_{\jj}$), let 
$x$ be covered by $y \neq z$ in $L[S]$ 
and let $\cng x=y(\bgd)$. If $x = z_{l,i}$ and $y = x_{l,i}$, 
where $1 < i \leq n_l$, then $z = z_{l,i-1}$ and
$\cng {z= x_{l,i-1}}= {y \jj z = x_{l,i-1}}(\bgd)$, 
because $\set{z_{l,i-1}, x_{l,i-1}}$ is in the list \eqref{E:seti}.
If $i = 1$, we proceed the same way with $z = m$.
We proceed ``on the right'' 
with the lists \eqref{E:setii} and \eqref{E:setiii}.
Finally, $x = m$ cannot happen because $m$ is covered only by one element.

To verify (C${}_{\mm}$), let $x$ cover $y \neq z$ in $L[S]$ 
and let $\cng x=y(\bgd)$.
The verification of (C${}_{\mm}$) is very similar 
to the arguments is the previous paragraph, 
except for $x = x_{l,i}$ and $y = z_{l,i}$, 
where $1 < i \leq n_l$, we have to verify that $z = z_{l,i-1}$; indeed, 
if $z \neq z_{l,i-1}$ then $z$ defines a protrusion, contradicting the assumption.
\end{proof}          
 
Let $L$ be an SPS lattice 
and let $S$ be a covering square. 
Let us call $S$ \emph{distributive} 
if the ideal generated by $S$ is distributive.
A distributive covering square has no protrusion. 
So Lemma \ref{L:distr} is closely related 
to a construction in G. Cz\'edli~\cite[p. 339]{gC12}, 
where a~fork is inserted into a distributive covering square.

\section{The congruence $\bgg$ on $K$}\label{S:gammaK} 
Let $L$, $S = \set{o, a_l, a_r, t}$, $C_l$, and $C_r$ 
be as in Section~\ref{S:Protrusion} and let $S$ be tight.  
We assume that there is a~protrusion, $p = x_{l,k}$ on~$C_l$.

\begin{figure}[b]
\centerline{\includegraphics[scale=0.85]{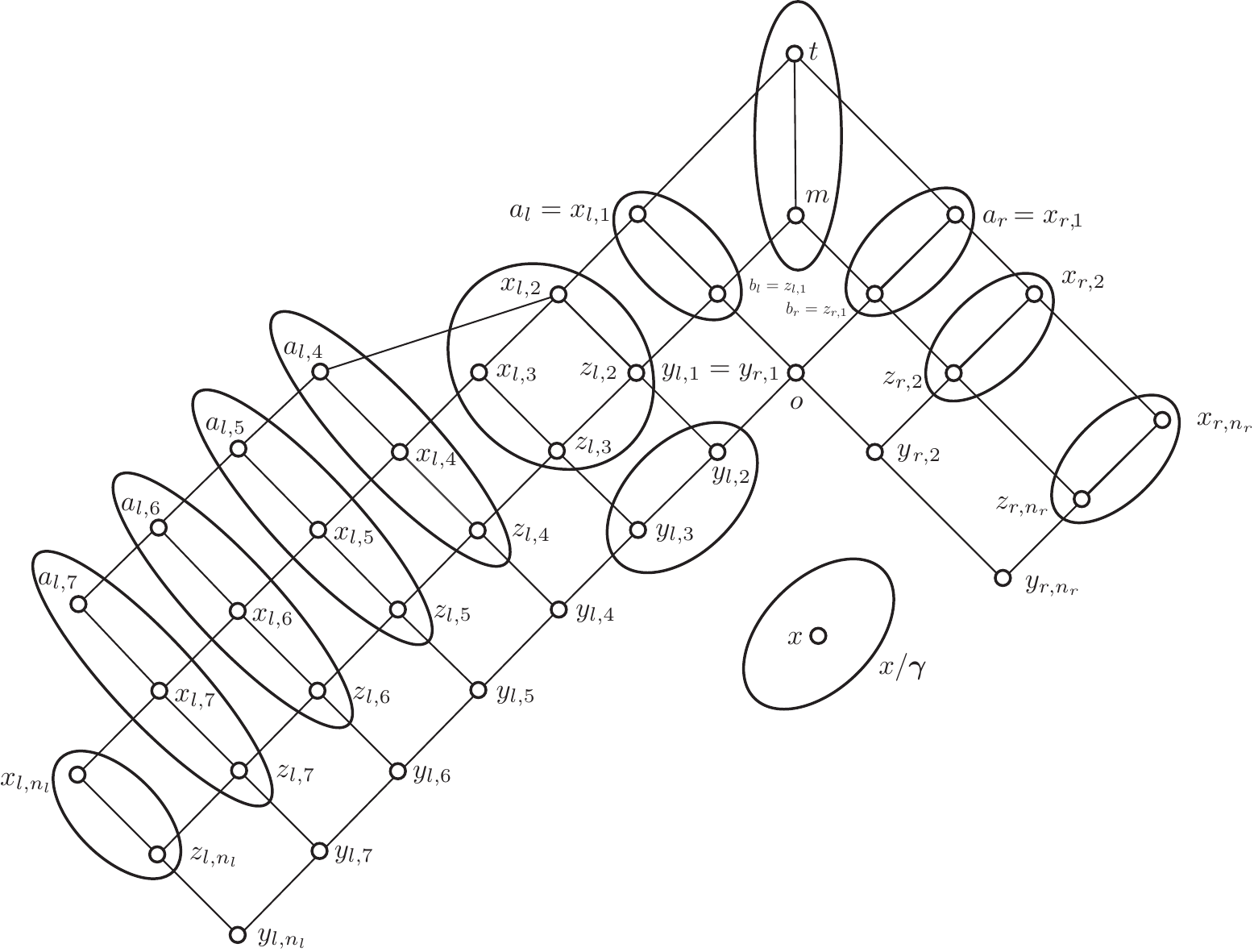}}
\caption{The congruence $\bgg$ on $K$}\label{F:gamma} 
\end{figure}

Let $\Btw$ be the set of elements of $L[S]$ \emph{between
$\ol C_l$ and $\ol C_r$} (including $\ol C_l$ and~$\ol C_r$).
Let 
\[
   K = \Btw \uu \New(p).
\]

\begin{lemma}\label{L:K}
$K$ is a lattice. It is a sublattice of $L[S]$.
\end{lemma}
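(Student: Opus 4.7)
My plan is to realize $K$ as the set of elements of $L[S]$ lying in the planar region bounded by two maximal chains from $0$ to $t$, and then to invoke the standard fact that such a region is a sublattice of a planar semimodular lattice. The right-hand bounding chain is $\ol C_r$; for the left-hand bounding chain $C^{*}$, I would start at $0$, follow the left boundary of $L$ up to $a_{l,k^*}$, ascend the protrusion chain $a_{l,k^*} \prec a_{l,k^*-1} \prec \cdots \prec a_{l,k+2} = q_1$, take the cover $q_1 \prec p = x_{l,k}$, and finally follow $C_l$ from $p$ up to $t$.

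First I would check that $C^{*}$ is indeed a chain of covers from $0$ to $t$ in $L[S]$. The covers $a_{l,j+1} \prec a_{l,j}$ for $k+2 \le j \le k^{*}-1$ come from the $\SC 2 \times \SC{k^{*}-k-1}$ wing structure of $[x_{l,k+2}, q_1]$ recorded in Section~\ref{S:Protrusion}; the cover $q_1 \prec p$ is built into the definitions of $p$ and $q_1$. The remaining portions of $C^{*}$---the left boundary of $L$ below $a_{l,k^{*}}$ and the chain $C_l$ from $p$ up to $t$---are chains of covers in $L$, and none of those covers are split by the fork construction, since the inserted elements $m$, the $z_{l,i}$, the $z_{r,i}$, and anything added by the recursive step all lie strictly between $\ol C_l$ and $\ol C_r$.

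Next I would verify that $K$ coincides with the set of elements of $L[S]$ lying in the planar region bounded by $C^{*}$ on the left and $\ol C_r$ on the right. The portion of this region between $\ol C_l$ and $\ol C_r$ is $\Btw$ by definition. The portion between $C^{*}$ and $\ol C_l$, which arises only where the two chains differ (namely, between $x_{l,k^{*}}$ and $p$), is precisely the wing of $[x_{l,k+2}, q_1]$; it contributes the $x_{l,j}$ for $j = k+2, \dots, k^{*}$, which are already on $\ol C_l \subseteq \Btw$, together with the $a_{l,j}$ constituting $\Prot(p)$. The maximality of $p$---as the largest element of $C_l$ below $t$ with a lower cover outside $C_l$ on the left---is what prevents any further ``left-of-$\ol C_l$'' elements of $L[S]$ at higher levels from being included in the region. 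This identification is the main obstacle, as it relies on both the maximality of $p$ and the explicit wing structure.

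With $K$ so realized, I would finally invoke the standard planar-semimodular fact: the set of elements lying in the planar region between two maximal chains sharing endpoints is closed under meets and joins, so it is a sublattice. Applied to $C^{*}$ and $\ol C_r$, this yields $K$ as a sublattice of $L[S]$, and in particular a lattice.
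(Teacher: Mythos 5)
Your argument takes a genuinely different route from the paper's. The paper proves the lemma by direct computation on the two-piece decomposition $K = \Btw \uu \New(p)$: both pieces are sublattices, a mixed meet falls back into $\Btw$, and a mixed join is computed via the least element of $C_l$ above the $\Btw$-component. You instead try to exhibit $K$ as the region between two maximal chains and invoke the Kelly--Rival-type fact that such a region is a sublattice of a planar lattice. That strategy can be made to work (after restricting to the planar ideal $\id{t}$, since your chains end at $t$ rather than at $1$) and is arguably more conceptual, but as written it has two real gaps.

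First, your chain $C^*$ is not a maximal chain of $L[S]$. The element $a_{l,k^*}$ sits on the left boundary of $L$ above $x_{l,n_l}$, so the segment of $C^*$ running up the left boundary of $L$ passes through the prime interval $[y_{l,n_l}, x_{l,n_l}]$ --- and the fork construction inserts $z_{l,n_l}$ into exactly that interval. Your blanket claim that the inserted elements ``all lie strictly between $\ol C_l$ and $\ol C_r$'' is therefore false: $z_{l,n_l}$ (and symmetrically $z_{r,n_r}$) lands on the boundary of $L[S]$. This is repairable --- route $C^*$ along the left boundary of $L[S]$, which picks up $z_{l,n_l} \in \Btw$ --- but it must be repaired. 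Second, and more seriously, the identification of the region with $K$ is asserted rather than proved. You must rule out elements of $L$ lying strictly inside the pocket between $C^*$ and $\ol C_l$ other than the grid $\setm{x_{l,j}, a_{l,j}}{k+2 \le j \le k^*}$; a priori some $a_{l,j}$ could cover a third element positioned between $a_{l,j+1}$ and $x_{l,j}$. What saves you is the Cz\'edli--Schmidt fact that in a slim semimodular lattice every covering square is a $4$-cell (has empty interior), applied to the squares of the wing of $[x_{l,k+2}, q_1]$; this is not stated in the paper and you would need to invoke it explicitly. You correctly flag this identification as the main obstacle, but flagging it is not the same as closing it. The paper's short computation sidesteps both issues entirely.
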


\begin{proof}
Both $\Btw$ and $\New(p)$ are sublattices of $L[S]$.
If $x \in \Btw$ and $y \in \New(p)$, then $x \mm y \in \Btw$. 
For $x \in \Btw$, let $x^+$ denote the smallest element of $C_l$
containing $x$. It is easy to see that $x \jj y = x^+$ if $x \nleq y$.
The statements follow.
\end{proof}

Now we define an equivalence relation $\bgg$ on $K$ 
($\bgg$ will be a part of $\bgg(S)$):

Let 
\begin{equation}
   t/\bgg = \set{m, t}.\label{E:i}
   \end{equation}

If $i < k$ or $k^* < i$, then
\begin{equation}
   x_{l,i}/\bgg = \set{x_{l,i}, z_{l,i}}.\label{E:ii}
\end{equation}

Let
\begin{align}
   x_{l,k}/\bgg 
   &= \set{x_{l,k}, x_{l,k+1},z_{l,k},z_{l,k+1}},\label{E:iii}\\
   y_{l,k}/\bgg &= \set{y_{l,k}, y_{l,k+1}}.\label{E:iv}\\
   \intertext{If $x_{l,i} \in \New(p)$, 
   that is, $k+2 \leq i \leq i^*$, then}
    x_{l,i}/\bgg 
   &= \set{a_{l,i}, x_{l,i}, z_{l,i}}.\label{E:v}
\end{align}

We define $x_{r,i}/\bgg$ symmetrically.

Define $\bgp = \consub{L}{y_{l,k}, y_{l,k+1}}$, a congruence of $L$,
and $\bgp_K = \consub{K}{y_{l,k}, y_{l,k+1}}$, a~congruence of $K$. 

Let $x/\bgg$ be as defined above in \eqref{E:i}--\eqref{E:v}; let  
\begin{equation}\label{E:vi}
   x/\bgg = x/\bgp_K,
\end{equation}
otherwise, see Figure~\ref{F:gamma}.
Clearly, $\bgp_K \leq \bgg$.

\begin{lemma}\label{L:gamma}
Let $L$ be an SPS lattice. 
Let $S$ be a \emph{tight square} and $p = x_{l,k}$ be a~protrusion on~$C_l$.
Then the binary relation $\bgg$ 
defined by \eqref{E:i}--\eqref{E:vi}
is a congruence relation on $K$.
\end{lemma}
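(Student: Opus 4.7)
The approach is to invoke Lemma~\ref{L:technical} and verify (C${}_\jj$) and (C${}_\mm$) for $\bgg$ at each relevant cover pair in $K$. A key preliminary remark is that $\bgp_K$ is a congruence of $K$ and that $\bgp_K \leq \bgg$; by clause \eqref{E:vi}, $\bgg$ differs from $\bgp_K$ only in how it handles the specific elements listed in \eqref{E:i}--\eqref{E:v} and their right-hand counterparts. Consequently, at any cover pair $a \prec b$ with $\cng a = b (\bgp_K)$, the conditions (C${}_\jj$) and (C${}_\mm$) for $\bgg$ are inherited from those for $\bgp_K$; what remains is to verify them at the finitely many cover pairs newly identified by $\bgg$ and at the boundary covers where a special block abuts its complement.

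Working block by block: for \eqref{E:i}, the only new cover is $m \prec t$, and the alternative covers of $t$ in $K$ are $a_l$ and $a_r$, which lie in block \eqref{E:iii} and its right-hand counterpart; the required joins and meets are then immediate from the $\SfS 7$-structure at the fork, see Figure~\ref{F:s7}. For \eqref{E:iii}, the new internal covers form the diamond $\set{z_{l,k+1}, x_{l,k+1}, z_{l,k}, x_{l,k}}$, and the external alternatives go up to $m$ in \eqref{E:i}, sideways to $y_{l,k}$ and $y_{l,k+1}$ in \eqref{E:iv}, and down into the protrusion block \eqref{E:v}; each required image is computed directly from the fork construction combined with Lemma~\ref{L:known}. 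For each block \eqref{E:v} with $k+2 \leq i \leq k^*$, the new internal covers are $z_{l,i} \prec x_{l,i}$, $x_{l,i} \prec a_{l,i}$, and, for $i < k^*$, $a_{l,i+1} \prec a_{l,i}$; the corresponding joins and meets with any alternative cover land in the prescribed $\bgg$-class by the cover-preserving $\SfS 7$ sublattices supplied by Lemma~\ref{L:known}(iii), since the left wing $[x_{l,k+2}, a_{l,k+2}], \dots, [x_{l,k^*}, a_{l,k^*}]$ consists of adjacent prime intervals. The symmetric blocks on the right are handled identically.

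The main obstacle will be the interface between the fork square \eqref{E:iii} and the protrusion block at $x_{l,k+2}$ in \eqref{E:v}: by Lemma~\ref{L:nice!}, $a_{l,k+2} \prec x_{l,k}$ and $x_{l,k+2} \prec x_{l,k+1}$, so these two cover pairs each link an element of a protrusion block with an element of the diamond block \eqref{E:iii}. Verifying (C${}_\jj$) and (C${}_\mm$) at such a cross-block cover requires computing the join or meet of a diamond element with a protrusion element, which we carry out inside the cover-preserving $\SfS 7$ sublattice $\set{p, q_1, p_l, p_r, q_1 \mm p_l, p_l \mm p_r, q_1 \mm p_r}$ exhibited in the proof of Lemma~\ref{L:nice!}. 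A second delicate point is the top end of the protrusion: $a_{l,k^*}$ lies on the left boundary of $L$ and hence has no third cover on the left in $K$, which closes off the protrusion and is precisely what makes the choice $K = \Btw \uu \New(p)$ adequate. Once these two interface situations are handled, the remaining cover pairs involve only local fork- or left-wing arithmetic and go through straightforwardly; the dual verifications for (C${}_\mm$) mirror the join arguments.
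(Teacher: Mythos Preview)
Your proposal follows essentially the same route as the paper: invoke Lemma~\ref{L:technical} and verify (C${}_\jj$) and (C${}_\mm$) by a case analysis over the blocks \eqref{E:i}--\eqref{E:vi}, using $\bgp_K \leq \bgg$ to dispose of case~\eqref{E:vi}. The paper organizes the same check as twelve explicit cases (six for each condition), while you group them by block and front-load the $\bgp_K$-reduction; the substance is the same.

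A few inaccuracies in your block bookkeeping are worth fixing before you write it out in full. First, when you treat \eqref{E:i}, the alternative lower covers of $t$ are $a_l = x_{l,1}$ and $a_r = x_{r,1}$; these lie in block~\eqref{E:ii} (for $i=1$), not in block~\eqref{E:iii}, unless $k=1$. Second, when you say the external alternatives from block~\eqref{E:iii} ``go up to $m$ in~\eqref{E:i}'', that is only true when $k=1$; in general the upper neighbours $x_{l,k-1}$ and $z_{l,k-1}$ of $x_{l,k}$ and $z_{l,k}$ sit in block~\eqref{E:ii}. Third, you omit block~\eqref{E:ii} from your block-by-block list; it is the easiest case, but it still needs an explicit line. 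None of these affect the validity of your strategy, but they would produce wrong cross-references if copied verbatim.

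Your identification of the two genuinely delicate spots---the interface between \eqref{E:iii} and \eqref{E:v} via the covers $a_{l,k+2} \prec x_{l,k}$ and $x_{l,k+2} \prec x_{l,k+1}$ supplied by Lemma~\ref{L:nice!}, and the termination at the boundary element $a_{l,k^*}$---matches exactly where the paper's argument requires care (its Case~(C${}_\mm$,\ref{E:iii}) subcase~3 and Case~(C${}_\mm$,\ref{E:v})).
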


\begin{proof}
Since $\bgg$ is an equivalence relation 
with intervals as equivalence classes,
by Lemma \ref{L:technical}, 
we only have to verify (C${}_{\jj}$) and (C${}_{\mm}$).

\emph{To verify} (C${}_{\jj}$),
let $v \neq w$ cover $u$ and let $\cng u = v (\bgg)$.
Then we distinguish six cases according to \eqref{E:i}--\eqref{E:vi}
in the definition of $\bgg$.

\emph{Case} (C${}_{\jj}$,\ref{E:i}): $u = m$. This cannot happen 
because $m$ has only one cover.

\emph{Case} (C${}_{\jj}$,\ref{E:ii}): $u = z_{l,i}$, $v = x_{l,i}$. 
Using the notation $m = z_{l,0}$ and $t = x_{l,0}$, 
then $w = z_{l,i-1}$ and so 
\[
   \cng {w = z_{l,i-1}} = {x_{l,i-1} = v \jj w} (\bgg)
\]
by (C${}_{\jj}$, \ref{E:ii}) or (C${}_{\jj}$, \ref{E:iii}).  

\emph{Case}(C${}_{\jj}$,\ref{E:iii}): $u,v \in \set{x_{l,k}, x_{l,k+1},z_{l,k},z_{l,k+1}}$.
 If $u = z_{l,k}$, $v = x_{l,k}$
or if $u = z_{l,k+1}$, $v = x_{l,k+1}$, 
we proceed as in Case (C${}_{\jj}$, \ref{E:ii}); the conclusion holds 
by \eqref{E:i} or \eqref{E:ii}. 
$u = x_{l,k+1}$ cannot happen because $x_{l,k+1}$ 
has only one cover in $K$. 
So we are left with $u = z_{l,k+1}$, $v = z_{l,k}$. 
Then $w = x_{l,k+1}$ and 
\[
   \cng {w = x_{l,k+1}} = {x_{l,k} = v \jj w } (\bgg).
\]

\emph{Case} (C${}_{\jj}$,\ref{E:iv}): $u,v \in \set{y_{l,k}, y_{l,k+1}}$. 
Then $u = y_{l,k+1}$, $v = y_{l,k}$, and $w = x_{l,k+1}$. Therefore, 
\[
   \cng {w = z_{l,k+1}} = {z_{l,k} = v \jj w} (\bgg)
\]
by \eqref{E:iii}.

\emph{Case} (C${}_{\jj}$,\ref{E:v}): $u,v \in \set{a_{l,k+2}, x_{l,k+2}, z_{l,k+2}}$.
In this case, either $u = z_{l,k+2}$, $v = x_{l,k+2}$
or $u = x_{l,k+2}$, $v = a_{l,k+2}$. 
If $u = z_{l,k+2}$, $v = x_{l,k+2}$, then $w = z_{l,k+1}$ and 
\[
   \cng {w = z_{l,k+1}} = {x_{l,k+1} = v \jj w} (\bgg)
\]
by \eqref{E:iii}. 
If $u = x_{l,k+2}$, $v = a_{l,k+2}$, then $w = x_{l,k+1}$ and
\[
   \cng {w = x_{l,k+1}} = {x_{l,k} = v \jj w } (\bgg)
\]
by \eqref{E:iii}.

\emph{Case} (C${}_{\jj}$,\ref{E:vi}): $u, v \in x/\bgp$ 
and so that \eqref{E:i}--\eqref{E:vi} do not apply. 
Then using the notation $o = y_{l,0} = y_{r,0}$, 
we have that $u <  y_{l,i}$ for some $i=0, \dots, n_l$
(or symmetrically).
Therefore, $\cng w = v \jj w (\bgp)$ since $\bgp$ is a congruence on~$K$.

\emph{To verify} (C${}_{\mm}$), let $u$ cover $v \neq w$ 
and let $\cng v = u (\bgg)$. Then we again distinguish six cases.

\emph{Case} (C${}_{\mm}$,\ref{E:i}): $u = t$, $v = m$. 
Since $S$ is tight, $w = x_{l,1}$ or symmetrically. 
Then 
\[
   \cng {w = x_{l,1}} = {z_{l,1} = v \mm w} (\bgg)
\]
by \eqref{E:ii}.

\emph{Case} (C${}_{\mm}$,\ref{E:ii}): $u = x_{l,i}$, $v = z_{l,i}$. 
Then $w = x_{l,i+1}$ and so 
\[
   \cng {w = x_{l,i+1}} = {v \mm w = z_{l,i+1}} (\bgg)
\]
by \eqref{E:ii}.

\emph{Case} (C${}_{\mm}$,\ref{E:iii}): 
$u,v \in \set{x_{l,k}, x_{l,k+1},z_{l,k},z_{l,k+1}}$.

There are four subcases:

\emph{Subcase 1}: $u = x_{l,k}$, $v = z_{l,k}$.

\emph{Subcase 2}: $u = x_{l,k+1}$, $v = z_{l,k+1}$.

\emph{Subcase 3}: $u = x_{l,k}$, $v = x_{l,k+1}$.

\emph{Subcase 4}: $u = z_{l,k}$, $v = z_{l,k+1}$.

For subcases 1 and 2, argue as in Case (C${}_{\mm}$,\ref{E:ii}).

For subcase 3, $w = z_{l,k}$ or $w = a_{l,k+2}$ in $K$. 
If $w = z_{l,k}$, then 
\[
   \cng {w = z_{l,k}} = {z_{l,k+1} = v \mm w} (\bgg)
\]
by \eqref{E:iii}. If $w = a_{l,k+2}$, then $w = v \mm w$, 
therefore, $\cng w = v \mm w (\bgg)$.

For subcase 4, $w = y_{l,k}$, so  
$\cng {w = y_{l,k}} = {y_{l,k+1} = v \mm w} (\bgg)$ 
by~\eqref{E:vi}.

\emph{Case} (C${}_{\mm}$,\ref{E:iv}): 
$u = y_{l,k}$, $v = y_{l,k+1}$. 
Then $\cng u = v (\bgp)$ by \eqref{E:vi}, so  
\[
   \cng w = {v \mm w} (\bgp)
\]
implying that
\[
   \cng w = {v \mm w} (\bgg)
\]
by \eqref{E:vi}.

\emph{Case} (C${}_{\mm}$,\ref{E:v}):  
$k+2 \leq i \leq i^*$ and $u,v \in \set{a_{l,i}, x_{l,i}, z_{l,i}}$. 
If $u = x_{l,i}$, $v = z_{l,i}$, 
we proceed as in Case (C${}_{\mm}$, \ref{E:ii}).
Otherwise, $u = a_{l,i}$, $v = x_{l,i}$. Then $w \prec a_{l,i}$ in $K$.
This cannot happen if $i = k^*$, since $a_{l,k^*}$ 
is on the boundary of $L$ and therefore of $K$. So $i < k^*$.
Then $a_{l,i+1}$ (and only $a_{l,i+1}$) is covered by $a_{l,i}$
but is distinct from $x_{l,i}$ in $K$ 
(there may be other elements of $L$ covered by $a_{l,i}$, 
but they are not in $K$). Therefore, 
\[
   \cng {w = a_{l,i+1}} = {x_{l,i+1} = v \mm w} (\bgg)
\]
by \eqref{E:v}.

\emph{Case} (C${}_{\mm}$,\ref{E:vi}): $u, v \in x/\bgp_K$. 
Then $\cng w = v \mm w (\bgp_K)$ since $\bgp_K$ is a congruence on~$K$.
So $\cng w = v \mm w (\bgg)$ by \eqref{E:vi}.
\end{proof}

We call $\bgp$ a \emph{protrusion congruence on~$L$}
and $\ol\bgp$ a \emph{protrusion congruence on~$L[S]$}; 
see Figure \ref{F:notextend} for the simplest example.

\section{The congruence $\bgg(S)$ on $L$}\label{S:gammaL}
Let $L$, $S = \set{o, a_l, a_r, t}$, $C_l = C_{l, 1}$, and $C_r= C_{r, 1}$ 
be as in Section~\ref{S:Protrusion}; 
recall that we assume that $S$ is tight. 

We want to describe the congruence 
$\bgg(S) = \consub{L[S]}{m,t}$ on $L[S]$.

By Lemma~\ref{L:distr},
if there is no protrusion on $C_l$ or on $C_r$, 
then $\consub{L[S]}{m,t}$ is the
congruence~$\bgd$ described in \eqref{E:seti}--\eqref{E:setiii}
by stipulating that all the other congruence classes are trivial.
The protrusion congruence $\bgp$ is trivial ($\bgp = \zero$).

So now we can assume that there is a~protrusion. 
If there is more than protrusion on~$C_l$, 
we choose the largest $p_1 = x_{l,k_1}$.

Let $K_1 = K = \Btw \uu \New(p)$.
Let $\bgg_1 = \bgg$ and $\bgp_1 = \bgp$ 
be the congruence relation on $K_1$ 
as defined in Section~\ref{S:gammaK}.

In $C_{l, 1}$, let us replace $\set{x_{l,k_1+1}, \dots, x_{l,k_1^*}}$ by 
$\set{a_{l,k_1+2}, \dots, a_{l,k_1^*}}$. 
We obtain the maximal chain $C_{l,2}$ of $K_1$.
Note that $C_{l,2}$ is shorter than $C_{l,1}$.

If there is no protrusion on $C_{l,1}$, we can extend $\bgg_1$ to $L[S]$
by using the congruence $\bgp = \con{y_{l,i}, y_{l,i+1}}$ of~$L$, 
and letting  
\begin{equation}\label{E:ext}
   x/\bgg_1 = x/\bgp,
\end{equation}
for $x \in L - K_1$.

Figure \ref{F:protrusionplus} illustrates this process. 
The top diagram shows the case when $p_1$ covers four elements, 
so $p_1$ remains the protrusion in the second step. 
The element $p_1$ and the four elements it covers generate a multifork
as defined in G. Cz\'edli~\cite{gC13}.
In the middle diagram there is a new protrusion $p_2 \in \Prot(p_1)$. 
In the bottom diagram there is a new protrusion $p_2 \nin \Prot(p_1)$. 

\begin{figure}[h!]
\centerline{\includegraphics[scale=0.55]{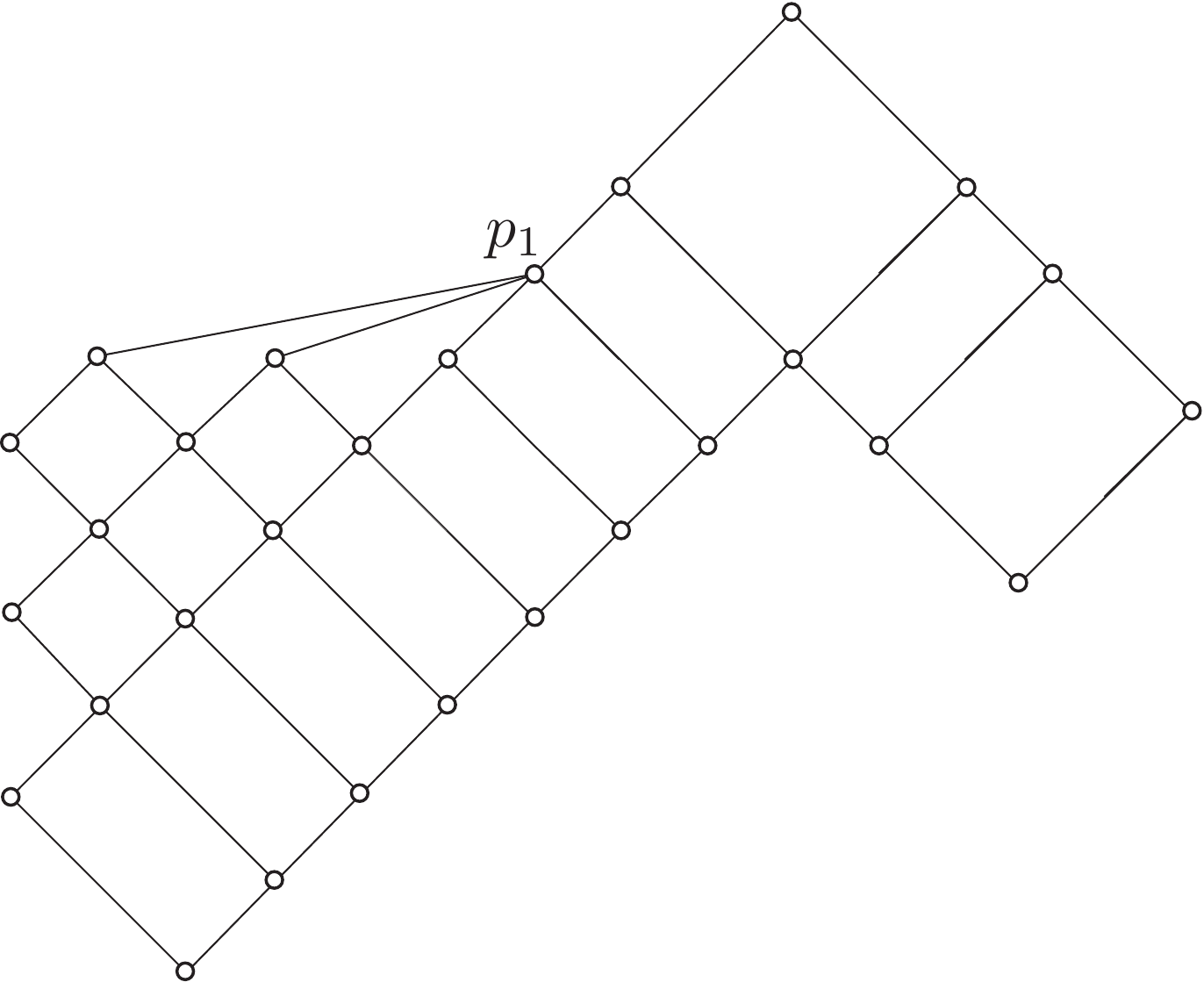}}
\vspace{-50pt}\centerline{\includegraphics[scale=0.55]{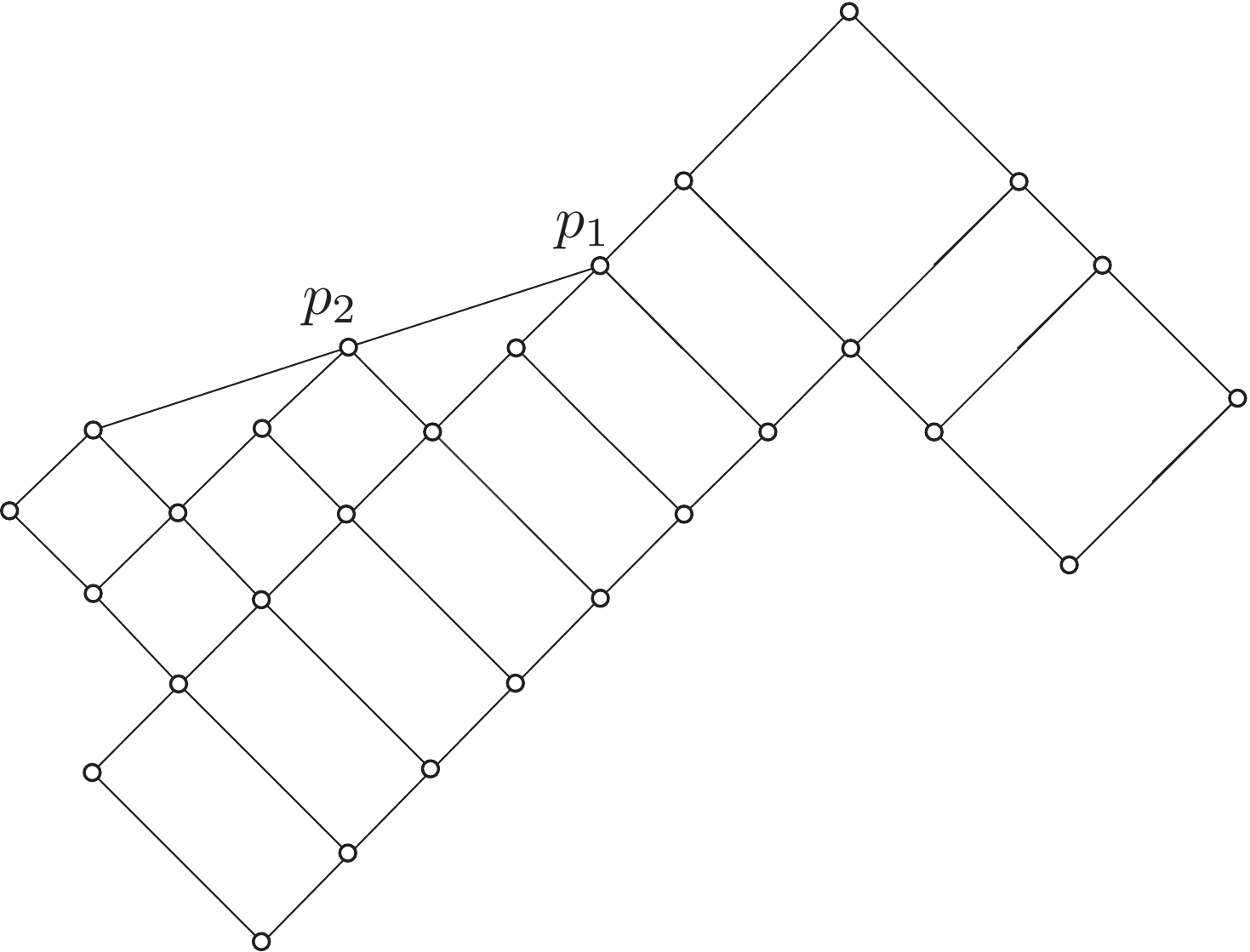}}
\vspace{-54pt}\centerline{\includegraphics[scale=0.55]{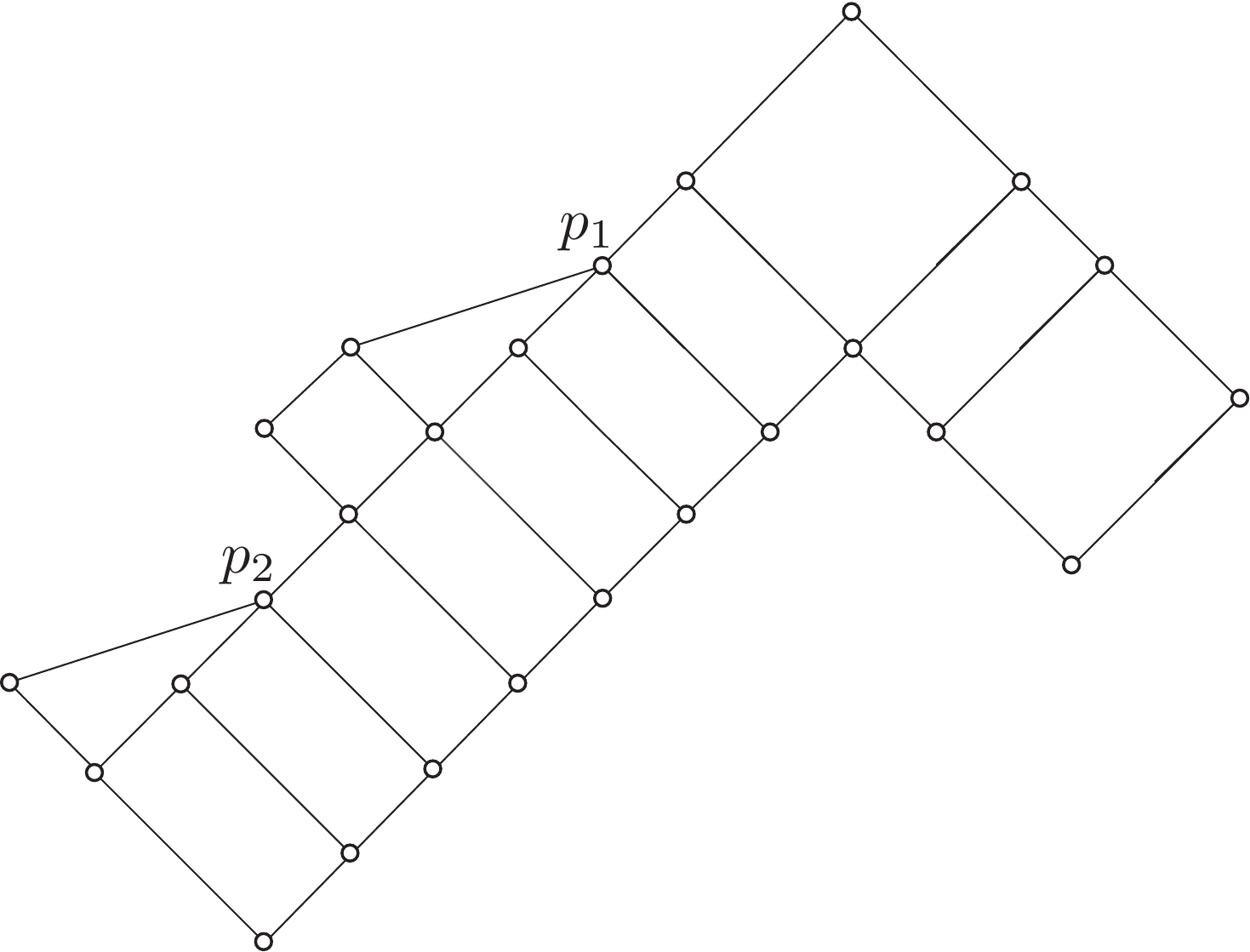}}
\caption{Three variations on the second step}\label{F:protrusionplus}
\end{figure}

We proceed thus in a finite number of steps 
to obtain $K = K_n$, $\bgg(S)$ and $\bgp$.

\begin{lemma}\label{L:ideal}
$K = \id{t}$. 
\end{lemma}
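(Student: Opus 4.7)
The argument splits into the two inclusions $K \subseteq \id{t}$ and $\id{t} \subseteq K$.

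For $K \subseteq \id{t}$, both $\ol C_l$ and $\ol C_r$ are maximal chains running from $0$ to $t$ inside $L[S]$, so by planarity every element of $\Btw$ is bounded above by $t$. Every protrusion element $a_{l,j} \in \New(p_i)$ is dominated by its protrusion vertex $p_i < t$: iterating Lemma~\ref{L:nice!} along the left wing of $[x_{l,k_i+2}, q_1]$ yields a descending sequence $a_{l,k_i^*} < \cdots < a_{l,k_i+2} = q_1 \prec p_i$, so each $a_{l,j}$ lies below $p_i$, hence below $t$. The symmetric argument handles right-protrusions and protrusions produced in subsequent iterations.

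For $\id{t} \subseteq K$, I would proceed by induction on the stage $i$ of the construction. Suppose $K_i \neq \id{t}$ and pick $x \in \id{t} \setminus K_i$. I would argue that $x$ must lie ``outside'' one of the current chains $\ol C_{l,i}$, $\ol C_{r,i}$ in the planar diagram, and that the smallest element of $C_{l,i}$ (resp., $C_{r,i}$) sitting above $x$ is necessarily a protrusion vertex of that chain. Granting this, the next iterative step either places $x$ directly into the new protrusion set $\New(p_{i+1})$ or draws $x$ into the enlarged between-region bounded by $\ol C_{l,i+1}$ and $\ol C_{r,i+1}$. Since $L$ is finite and each step strictly enlarges $K_i$, the process terminates with $K_n = \id{t}$.

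\textbf{The main obstacle} is the planar claim that any $x \in \id{t}$ outside the currently enclosed region is caught by a protrusion of the current chain. To justify it, I would follow an upward covering path from $x$ to $t$ inside $\id{t}$: because $x$ lies outside the enclosed region while $t$ lies on its boundary, this path must cross $\ol C_{l,i}$ (or $\ol C_{r,i}$) at some cover $u \prec v$ with $v$ on the chain and $u$ to its outside. This forces $v$ to have a cover off the chain, so $v$ is a protrusion vertex of $C_{l,i}$; Lemma~\ref{L:known}(ii)--(iii) applied to the $\SfS 7$ sublattice generated by the three covers of $v$ then ensures that the bulge at $v$ has exactly the left-wing structure inserted as $\New(p_{i+1})$ at the next step, and in particular $x$ is captured. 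Iterating over the nested bulges exhausts $\id{t}$.
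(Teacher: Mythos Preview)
Your approach is essentially the paper's: both arguments show that whenever some $x\in\id{t}$ lies outside the current $K_i$, one can locate a protrusion vertex on the current boundary chain (the paper does this by choosing a maximal such $x$ and taking a cover $y\in K$; you do it by running an upward covering path from $x$ until it first hits the chain). In either version, the existence of such $x$ forces another step, and finiteness yields $K_n=\id{t}$.

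One point to tighten: your sentence ``the next iterative step either places $x$ directly into $\New(p_{i+1})$ or draws $x$ into the enlarged between-region'' is not generally true, because $p_{i+1}$ is chosen as the \emph{largest} protrusion on the current chain and need not be the vertex $v$ you produced above $x$; the paper flags exactly this with ``though not necessarily in $K_{i+1}$''. Fortunately your argument does not actually use that sentence: the contrapositive ``no protrusion $\Rightarrow K_i=\id{t}$'' together with strict growth and finiteness already gives the conclusion, and your final line ``iterating over the nested bulges exhausts $\id{t}$'' is the right way to say it. Also, the phrase ``$v$ has a cover off the chain'' should read ``$v$ covers an element off the chain'' (a lower cover), matching the definition of a protrusion vertex.
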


\begin{proof}
For $x \in \id{t}$, we verify that $x \in K$. If $x = t$, the statement is trivial.
Let $x < t$. Then either $x \leq a_l$ or $x \leq a_r$, say,
$x \leq a_l$. Let $x$ be the largest element 
with $x \leq a_l$ and $x \nin \id{t}$. 
Then there is an element $y$ such that $x \prec y \in K$.
If $i$ is the smallest integer with $y \in K_i$,
then $y$ is a protrusion and $x$ will be added with $\New(y)$
(though not necessarily in $K_{i+1}$). 
\end{proof}

More formally, if $K_{n-1} = \Btw(C_{l, n-1}, C_{r, n-1})$, $\bgg_{n-1}$,
and $\bgp_{n-1} = \bgp$ 
have already been defined, 
we define $K_{n}$ as in Section~\ref{S:gammaK}.

If there are no protrusions on $C_{l, n-1}$ and $C_{r, n-1}$, we are done.
If there is a protrusion, say on $C_{l, n-1}$, 
then we choose the largest one, $p$.  
We construct $K_n$ from~$K_{n-1}$ as we have constructed~$K$, namely, 
\[
   K_n = \Btw(C_{l, n-1}, C_{r, n-1}) \uu \New(p).
\]
We then define $\bgg_n$ with \eqref{E:i}--\eqref{E:vi}.

The verification that everything works is the same, 
\emph{mutatis mutandis}, apart from the more complicated notation.

\section{The upper covers of $\bgg(S)$}\label{S:uppercovers}

In this section, we describe the covers of $\bgg(S)$
in the order of join-irreducible congruences of $L[S]$, 
for a tight square~$S$.

Let $G$ denote the set of prime intervals of $L[S]$ listed in 
\eqref{E:seti}--\eqref{E:setiii}. 

\begin{lemma}\label{L:G}
Let $L$ be an SPS lattice. 
Let $S$ be a \emph{tight square}. 
For a prime~interval~$\fp$ of $L[S]$, 
we have $\bgg(S) = \con{\fp}$ if{}f $\fp \in G$.
\end{lemma}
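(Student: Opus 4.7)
The plan is to characterize the prime intervals generating $\bgg(S) = \con{m,t}$ via Lemma~\ref{L:cproj}: $\con{\fp} = \bgg(S)$ if and only if $\fp$ is mutually congruence-projective with $[m,t]$, i.e., $\fp \cproj [m,t]$ and $[m,t] \cproj \fp$.

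For the direction $\Leftarrow$, I would exhibit mutual congruence-perspectivity between each $\fp \in G$ and $[m,t]$. Inside the inserted $\SfS 7$, the covering square $\set{b_l, a_l, m, t}$ has opposite sides $[z_{l,1}, x_{l,1}] = [b_l, a_l]$ and $[m,t]$, giving mutual perspectivity. For $i \geq 2$, the fork construction produces a cover-preserving sublattice on $\set{y_{l,i-1}, y_{l,i}, z_{l,i-1}, z_{l,i}, x_{l,i-1}, x_{l,i}}$ isomorphic to $\SC 3 \times \SC 2$, inside which $[z_{l,i}, x_{l,i}]$ and $[z_{l,i-1}, x_{l,i-1}]$ are opposite sides of the covering square $\set{z_{l,i}, x_{l,i}, z_{l,i-1}, x_{l,i-1}}$. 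Induction on $i$ then passes the equality $\con{\fp} = \con{m,t}$ down the entire left wing, and symmetrically down the right wing; the case $\fp = [m,t]$ is trivial.

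For the direction $\Rightarrow$, I would argue the contrapositive: $\fp \notin G$ implies $\con{\fp} \neq \bgg(S)$. If $\bgg(S)$ does not collapse $\fp$, then $\con{\fp} \not\leq \bgg(S)$ and we are done. Otherwise $\fp$ is collapsed by $\bgg(S)$, and the explicit description \eqref{E:i}--\eqref{E:vi}, iterated as in Section~\ref{S:gammaL}, classifies such an $\fp$ as a \emph{protrusion} prime interval: either a horizontal rung inside a protrusion $\SC 3 \times \SC 2$ sublattice (like $[y_{l,k+1}, y_{l,k}]$ or $[x_{l,k+1}, x_{l,k}]$, or the analogous rungs at deeper protrusion levels) or a pair inside a $\bgp$-class on $L - K$. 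For each such $\fp$, the plan is to show that $\con_{L[S]}{\fp}$ is dominated by the corresponding extended protrusion congruence $\ol\bgp$, and then to observe that $\ol\bgp$ does not collapse the vertical rung $[z_{l,k}, x_{l,k}] \in G$ of the relevant $\SC 3 \times \SC 2$ sublattice. Since $\bgg(S)$ does collapse $[z_{l,k}, x_{l,k}]$, we conclude $\con{\fp} \lneq \bgg(S)$.

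The main obstacle is the $\Rightarrow$ direction in the presence of nested protrusions: one must verify the non-collapse of $G$-intervals at each iteration level of the construction in Section~\ref{S:gammaL}. This reduces to the repeated observation that the congruence generated by a single horizontal rung in a $\SC 3 \times \SC 2$ sublattice collapses only the three horizontal rungs and leaves every vertical column intact, combined with the cover-preserving $\SfS 7$ sublattices supplied by Lemma~\ref{L:known}(iii) to control the propagation of $\con_{L[S]}{\fp}$ outside the protrusion.
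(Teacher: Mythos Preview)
Your proposal is correct and follows essentially the same approach as the paper, namely reading off the explicit description of the $\bgg(S)$-classes given in \eqref{E:i}--\eqref{E:vi} (iterated as in Section~\ref{S:gammaL}). The paper's proof is a single sentence: the prime intervals collapsed by $\bgg(S)$ are either those listed in \eqref{E:seti}--\eqref{E:setiii} or they generate the protrusion congruence $\bgp$, and since $\bgp < \bgg(S)$ the latter cannot satisfy $\con{\fp} = \bgg(S)$; your write-up simply makes explicit both the mutual perspectivity among $G$-intervals for the $\Leftarrow$ direction and the strict inequality $\ol\bgp < \bgg(S)$ (via the non-collapse of a $G$-interval) for the $\Rightarrow$ direction, which the paper leaves to the reader.
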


\begin{proof}
If we look at all the prime intervals $\fp$ collapsed by $\bgg(S)$
as listed in \eqref{E:i}--\eqref{E:vi}, then they are either 
listed in \eqref{E:seti}--\eqref{E:setiii} 
or they generate $\bgp$.
\end{proof}

\begin{lemma}\label{L:uppercover}
Let $L$ be an SPS lattice. 
Let $S$ be a \emph{tight square} and 
let $\fp$ be a prime interval of $L$.
Let us assume that $\bgg(S) < \con \fp$
in $L[S]$. Then either $\bga_l(S) \leq \con \fp$
or $\bga_r(S) \leq \con \fp$ in~$L$.
\end{lemma}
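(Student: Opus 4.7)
My plan is a four-way case analysis on the restriction $\bga \restr S$, where $\bga := \consub{L}{\fp}$. Since $S$ is a sublattice of $L$ isomorphic to $\SC 2^2$, this restriction is a congruence of $\SC 2^2$, and the possibilities are exactly $\zero_S$, $\one_S$, and the two middle congruences $\set{\set{o, a_l}, \set{a_r, t}}$ and $\set{\set{o, a_r}, \set{a_l, t}}$.

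Three of these cases will yield the conclusion immediately. If $\bga \restr S = \one_S$, then $\bga$ collapses both $[a_l, t]$ and $[a_r, t]$, giving $\bga_l(S), \bga_r(S) \leq \bga$. If $\bga \restr S = \set{\set{o, a_r}, \set{a_l, t}}$, then $\bga$ collapses $[a_l, t]$, so $\bga_l(S) \leq \bga$; symmetrically, the other middle congruence forces $\bga_r(S) \leq \bga$. These three cases use essentially nothing beyond the definitions of $\bga_l(S)$ and $\bga_r(S)$ and the fact that $\bga \restr S$ records which edges of $S$ are already collapsed by $\bga$ in $L$.

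The remaining case $\bga \restr S = \zero_S$ is where the hypothesis must do work, and I plan to show it leads to a contradiction. By Theorem~\ref{T:extension}(ii), $\bga$ has a minimal extension $\bgb$ to $L[S]$, given explicitly by the partition \eqref{E:zero} in the proof of that theorem; crucially, $\set{m}$ appears as a singleton class of $\bgb$. By minimality, $\bgb = \consub{L[S]}{\bga}$, and this in turn equals $\consub{L[S]}{\fp}$, because a congruence of $L[S]$ contains $\bga$ if and only if it collapses $\fp$. Hence $\consub{L[S]}{\fp}$ does not identify $m$ with $t$, so $\bgg(S) \nleq \consub{L[S]}{\fp}$, contradicting the hypothesis $\bgg(S) < \consub{L[S]}{\fp}$.

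The only point requiring care is the identification of $\bgb$ with $\consub{L[S]}{\fp}$ in the final case; this is a matter of chasing the two minimality definitions and does not require Lemma~\ref{L:cproj} or any analysis of congruence-projective chains in $L[S]$. Once this identification is in place, the proof is essentially a matter of reading off, in each of the three surviving cases, which edge of $S$ the restriction $\bga \restr S$ already collapses, and using the singleton-class statement about $m$ in \eqref{E:zero} to kill the fourth case.
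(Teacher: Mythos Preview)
Your argument is correct and takes a genuinely different route from the paper's proof.

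The paper proceeds by invoking Lemma~\ref{L:G} (which rests on the full description of $\bgg(S)$ worked out in Sections~\ref{S:Protrusion}--\ref{S:gammaL}) together with Lemma~\ref{L:cproj} to produce a minimal congruence-projective chain in $L[S]$ from $\fp$ to a prime interval $\fq \in G$, and then analyzes the last two links of that chain by cases.

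Your approach sidesteps all of that. The identification $\consub{L[S]}{\fp} = \consub{L[S]}{\bga}$ is exactly right: a congruence of $L[S]$ collapses $\fp$ if and only if its restriction to $L$ does, if and only if that restriction contains $\bga = \consub{L}{\fp}$. With this in hand, the case $\bga\restr S = \zero_S$ is killed by Theorem~\ref{T:extension}(ii) and the explicit form~\eqref{E:zero} of the minimal extension (with $\set{m}$ a singleton block), while the three remaining cases on $\SC 2^2$ hand you $\bga_l(S) \leq \bga$ or $\bga_r(S) \leq \bga$ for free.

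What your approach buys is economy: you need neither Lemma~\ref{L:cproj}, nor Lemma~\ref{L:G}, nor the structural description of $\bgg(S)$ via protrusions. You also never actually use the tightness of $S$; your argument shows the stronger statement that $\bgg(S) \leq \consub{L[S]}{\fp}$ already forces the conclusion, for any covering square $S$. What the paper's approach buys is that it exercises the machinery of Sections~\ref{S:Protrusion}--\ref{S:gammaL}, which is of independent interest and is in any case needed for Theorem~\ref{T:tight}.
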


\begin{proof}
So let $\bgg(S) < \con \fp$ in $L[S]$ 
for a prime interval $\fp$ of $L[S]$. 
By~Lemmas \ref{L:cproj} and \ref{L:G}, 
there is a sequence of intervals in $L[S]$:
\begin{equation}\tag{S}
   \fp = [e_0,f_0] \cpersp [e_1,f_1] \cpersp 
      \cdots \cpersp [e_n,f_n] = \fq= [z_{l,i}, x_{l,i}] \in G.
\end{equation}
(or symmetrically), using the notation $t = x_{l,0}$, $m = z_{l,0}$.
We can assume that (S) was chosen to minimize $n$. 
In particular, $\cperspup$ and $\cperspdn$ alternate in~(S).

The interval $[e_{n-1}, f_{n-1}]$ 
has a prime subinterval $[e'_{n-1}, f'_{n-1}]$ perspective to~$\fq$. 
Since $G$ is a set of prime interval 
closed under prime perspectivity, 
it follows that $[e_{n-1}, f_{n-1}]$ has $\fq$ as a subinterval
by minimality.

We cannot have $[e'_{n-1}, f'_{n-1}] = [e_{n}, f_{n}]$ 
because this conflicts with the minimality of $n$. So there are two cases to consider.

\emph{Case 1}: $e_{n-1} < e_n$. Then $f_{n-1} = f_n$.
If $[e_n, f_n] = [m,i]$, 
then $e_{n-1} \leq z_{l,1}$ (or symmetrically) 
because $m$ is meet-irreducible. 
Therefore, $\cng z_{l,1} = t(\con{\fp})$ 
and so $\con{\fp} \geq \bga_l$, as claimed.

So we can assume that $[e_n, f_n] = [z_{l,i}, x_{l,i}]$ 
(or symmetrically) with $i \geq 1$. Then $e_{n-1} < z_{l,i}$ 
and so $e_{n-1} \leq z_{l,i+1}$ or $e_{n-1} \leq y_{l,i}$. 
The first possibility contradicts the minimality of $n$, while the second yields that 
$\con{\fp} \geq \bga_r$, as claimed.

\emph{Case 2}: $e_{n-1} = e_n$ and $[e_{n-1}, f_n] \in G$.
Since $[e_{n-1},f_{n-1}] \cperspdn [e_{n},f_{n}]$, it follows that 
$[e_{n-2},f_{n-2}] \cperspup [e_{n-1},f_{n-1}]$.
Therefore, $e_{n-2} \leq x_{l,i} < x_{l,i} < f_{n-2}$
and $f_{n-2} \jj e_{n-1} = f_{n-1}$
(which implies that $x_{l,i}$ precedes the first protrusion),
contradicting the minimality of $n$.
\end{proof}

From Lemma \ref{L:uppercover}, 
we immediately conclude Theorem~\ref{T:uppercover}.

\section{Concluding comments}\label{S:comments}
G.~Gr\"atzer, H. Lakser, and E.\,T. Schmidt \cite{GLS98a} started
the study of planar semimodular lattices and their congruences. 
This field is surveyed in the chapters 
G.~Cz\'edli and G. Gr\"atzer \cite{CGa}
and G. Gr\"atzer \cite{gG14a}
in the book \cite{LTE}.

As it is proved in G.~Gr\"atzer, H. Lakser, and E.\,T. Schmidt \cite{GLS98a},
G. Gr\"atzer and E. Knapp~\cite{GKn09},
G. Gr\"atzer and E.\,T. Schmidt~\cite{GS13b},
every finite distributive lattice $D$ can be represented 
as the congruence lattice of a finite planar semimodular lattice~$L$.

It is crucial that in this result the lattice $L$ 
is not assumed to be slim.
The $\SM 3$ sublattices play a central role in the constructions.
So what happened if we cannot have $\SM 3$ sublattices?

\begin{problem}\label{P:SPS}
Characterize the congruence lattices of SPS lattices.
\end{problem}

\begin{problem}\label{P:SPS3}
Characterize the congruence lattices of patch lattices.
\end{problem}

\begin{problem}\label{P:SPS4}
Characterize the congruence lattices of slim patch lattices.
\end{problem}

Of course, if we drop ``slim'' from SPS, there is nothing to say.
Any finite order can be represented as $\Ji(\Con L)$
of a planar semimodular lattice $L$,
see G. Gr\"atzer, H. Lakser, and E.\,T. Schmidt \cite{GLS98a}, or of a rectangular lattice, see G. Gr\"atzer and E. Knapp~\cite{GKn09}; 
see also G. Gr\"atzer and E.\,T. Schmidt~\cite{GS13b}.

\begin{figure}[hbt]
\centerline{\includegraphics{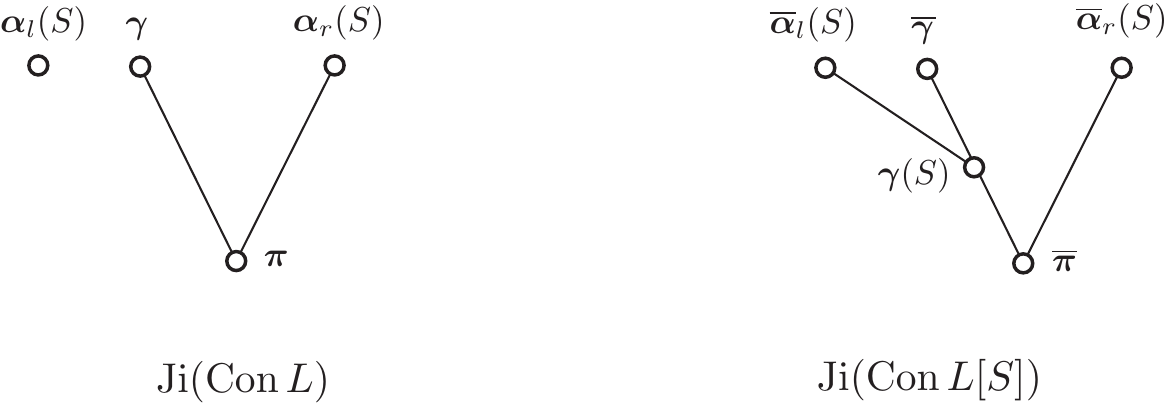}}
\caption{Not suborder}\label{F:notsuborder}
\end{figure}

Look at Figure~\ref{F:notsuborder}. 
It shows the diagrams of $\Ji(\Con L)$ and $\Ji(\Con L[S])$
for the lattice $L$ and covering square $S$ of Figure~\ref{F:forkexample}.
We see that $\Ji(\Con L[S]) = \Ji(\Con L) \uu \set{\bgg(S)}$.
The map $\bga \mapsto \oa$ is an isotone and one-to-one map
of $\Ji(\Con L)$ into $\Ji(\Con L[S])$ but it is not an embedding.
Indeed, $\con{a_l,n} \nleq \con{a_l,t}$ 
but $\consub{L[S]}{a_l,n} \leq \consub{L[S]}{a_l,t}$.

\begin{problem}\label{P:interrelate}
How do $\Ji(\Con L)$ and $\Ji(\Con L[S])$ interrelate?
\end{problem}

\end{document}